\theoremstyle{plain}
\newtheorem{theorem}{Theorem}
\newtheorem{lemma}{Lemma}
\theoremstyle{definition}
\newtheorem{definition}{Definition}
\title{A Partition Function Connected with the G\"ollnitz--Gordon Identities}
\author{Nicolas Allen Smoot \\ Georgia Southern University}
\date{}
\begin{document}
\maketitle

\begin{abstract}
We use the celebrated circle method of Hardy and Ramanujan to develop convergent formul\ae\ for counting a restricted class of partitions that arise from the G\"ollnitz--Gordon identities.
\end{abstract}

\section{Introduction}

The purpose of this article is to illustrate a beautiful application of the tools of complex analysis to a discrete subject: the theory of addition over the integers, also known as partition theory.

A partition of a positive integer $n$ is simply an expression of $n$ as a sum of other positive integers.  For example, taking the number $5$, we find $7$ different partitions: $5$, $4+1$, $3+2$, $3+1+1$, $2+2+1$, $2+1+1+1$, and $1+1+1+1+1$.

The number of partitions of $n$ is denoted by $p(n)$, and is often called the partition function.  In our example, we have $p(5)=7$.

While partitions have been studied since the time of Euler \cite{Hardy2}, very little was known about the partition function itself before the twentieth century.  Indeed, at the end of the nineteenth century, attempts to study the behavior of the prime counting function \cite{Stein} had led to a general sense of pessimism in number theory \cite{Hardy1}; it was expected that any careful analysis of $p(n)$ would produce an asymptotic formula that was approximate at best, and certainly not useful for direct computation.

It was not until 1918 that Hardy and Ramanujan developed the techniques to conduct a detailed study of $p(n)$ \cite{Hardy1}.  The results of their work were astonishing: not only were they capable of achieving a formula that could give the exact value of $p(n)$ with relative efficiency, but the formula itself is an utterly bizarre object, as an infinite series containing Bessel functions, coprime sums over roots of unity, and $\pi$---analytic entities that seem wholly irrelevant to the question of simple addition over the natural numbers.

The techniques that Hardy and Ramanujan had developed are embodied in what is now known as the circle method.  This method has since become one of the most basic tools in analytic number theory \cite{Rademacher3}, \cite{Vaughan}.

Notably, the circle method has continued to contribute to the theory of partitions and $q$-series.  Hardy and Ramanujan's formula was carefully refined by Rademacher, first in 1936 \cite{Rademacher1} to make their formula for $p(n)$ convergent, and again in 1943 \cite{Rademacher2} as an adjustment of the method itself.  Soon thereafter, it was realized that the techniques embodying the circle method could be used to develop formul\ae\ for a variety of more restricted partition functions (two notable examples are \cite{Niven} and \cite{Lehner}).

We are interested here in one such partition function, associated with the G\"ollnitz--Gordon identities \cite{Gollnitz}, \cite{Gordon}, which we provide here for reference:

\begin{theorem}[G\"ollnitz--Gordon Identities]

Fix $a$ to be either $1$ or $3$.  Given an integer $n$, the number of partitions of $n$ in which parts are congruent to $4, \pm a \pmod 8$, is equal to the number of partitions of $n$ in which parts are non-repeating and non-consecutive, with any two even parts differing by at least $4$, and with all parts $\ge a$. 

\end{theorem}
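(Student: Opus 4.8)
The plan is to recast the purely combinatorial statement as an identity of generating functions and then prove that identity analytically. For a fixed $a \in \{1,3\}$, let $G_a(q) = \sum_{n\ge 0} g_a(n)q^n$ and $D_a(q) = \sum_{n\ge 0} d_a(n)q^n$, where $g_a(n)$ counts the partitions of $n$ into parts $\equiv 4, \pm a \pmod 8$ and $d_a(n)$ counts the difference-condition partitions. The theorem is precisely the assertion $G_a(q) = D_a(q)$ as formal power series. The first, routine step is to write down $G_a(q)$: since each admissible part may be repeated freely, $G_a(q) = \prod_j (1-q^j)^{-1}$, the product ranging over positive integers $j \equiv 4, \pm a \pmod 8$; thus $G_1(q) = 1/[(q;q^8)_\infty (q^4;q^8)_\infty (q^7;q^8)_\infty]$ and analogously for $a=3$.

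The crux is to produce a series expression for $D_a(q)$. Grouping the difference-condition partitions by their number of parts $n$, I would argue that the generating function for those with exactly $n$ parts factors as $q^{Q_a(n)}(-q;q^2)_n/(q^2;q^2)_n$ for an appropriate minimal quadratic $Q_a(n)$. One first locates the smallest admissible partition with $n$ parts: the staircase $1,3,5,\dots,(2n-1)$ when $a=1$, giving $Q_1(n)=n^2$, and $3,5,\dots,(2n+1)$ when $a=3$, giving $Q_3(n)=n^2+2n$. Subtracting this staircase should leave a residual whose freedom is exactly a choice of a partition into even parts bounded by $2n$ (the factor $1/(q^2;q^2)_n$) together with a selection of distinct odd increments (the factor $(-q;q^2)_n$). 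Summing over $n$ then yields the Rogers--Ramanujan--Slater-type series $D_a(q) = \sum_{n\ge 0} q^{Q_a(n)}(-q;q^2)_n/(q^2;q^2)_n$.

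It then remains to prove the analytic identity $\sum_{n\ge 0} q^{Q_a(n)}(-q;q^2)_n/(q^2;q^2)_n = G_a(q)$. I would establish this by a $q$-difference argument: introduce a two-variable refinement $H_a(z,q)$ that tracks the number of parts, derive from the recursive structure of the constraints a contiguous relation of the shape $H_a(z,q) = H_a(zq^2,q) + (\text{shift term})$, solve the recurrence for the power-series coefficients, and finally identify the resulting infinite product by appeal to the Jacobi triple product identity, matching it against $G_a(q)$.

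The main obstacle is the middle step: verifying that the staircase subtraction genuinely decouples the four constraints into the clean factor $(-q;q^2)_n/(q^2;q^2)_n$. The interaction between the ``no two consecutive parts'' condition and the stronger ``even parts differ by at least $4$'' condition is delicate, and one must confirm that once the minimal staircase is removed the surviving parts are unconstrained apart from the stated size bounds; an off-by-one slip in the staircase changes $Q_a(n)$ and destroys the agreement with the product. By contrast, the $q$-difference and Jacobi-triple-product steps are standard once the series is in hand.
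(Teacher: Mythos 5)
First, a remark on the comparison itself: the paper does not prove this theorem at all. It is quoted for reference, with the proof delegated to the original papers of G\"ollnitz and Gordon; the paper's actual work (the circle-method formula for $g_a(n)$) takes the identities as given. So your attempt can only be judged against the classical proofs, and against them it has two genuine gaps --- one you acknowledge, one you underestimate.

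Your skeleton is the right classical route, and your series are the correct ones: the minimal staircases do give $Q_1(n)=n^2$ and $Q_3(n)=n^2+2n$, and the resulting sums $\sum_{n\ge 0} q^{Q_a(n)}(-q;q^2)_n/(q^2;q^2)_n$ are exactly the Slater-type series known to equal the products. But the two steps you defer are precisely the content of the theorem. (a) The fixed-$n$ factorization is not a routine staircase subtraction: after removing $1,3,\dots,2n-1$ (resp.\ $3,5,\dots,2n+1$), the residual parts are \emph{not} an unconstrained pair consisting of a partition into even parts at most $2n$ and a set of distinct odd numbers. The parity of $\lambda_i$ depends jointly on the residual and its position, so the condition that even parts differ by at least $4$ translates into a position-dependent constraint on the residuals; decoupling it into the factor $(-q;q^2)_n/(q^2;q^2)_n$ requires an actual bijection (this is carried out, for instance, in Andrews' treatment of the G\"ollnitz--Gordon partitions), and your ``should leave a residual whose freedom is exactly\dots'' is a theorem-shaped hole, as you yourself concede. (b) More seriously, the final series-equals-product step is not ``standard once the series is in hand.'' These are Rogers--Ramanujan-type identities: the Jacobi triple product alone cannot convert the sum into the product, and your $q$-difference plan is circular as sketched --- a contiguous relation of the form $H_a(z,q)=H_a(zq^2,q)+(\text{shift term})$ merely regenerates the series you already obtained in step (a); it does not identify the limit with an infinite product. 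Closing this requires real machinery: either Slater's Bailey-pair method (Watson's $q$-analogue of Whipple's theorem specialized and then finished with the triple product), or G\"ollnitz's original argument showing that both the sum side and the product side satisfy the same system of $q$-difference equations with the same initial conditions. Until one of these is supplied, the proposal is an accurate map of where the proof lives, not a proof.
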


Each identity---one for either value of $a$---equates the sizes of two different classes of partitions of $n$, while not actually indicating the class size itself.  We will use Hardy and Ramanujan's method, together with Rademacher's refinements, to formulate a convergent expression for the number of partitions associated with these identities.

\begin{definition}
Fix $a$ at either $1$ or $3$.  A G\"ollnitz--Gordon partition of type $a$ is composed of parts of the form $4,\pm a \pmod 8$.  The generating function for such partitions is expressed as $F_a(q)$, and the actual number of such partitions of $n$ is given as $g_a(n)$.
\end{definition}

We seek a formula for $g_a(n)$.  The author wishes to note his deep appreciation for the guidance and encouragement of Professor Andrew Sills, who first suggested this problem.

In keeping with the theory of $q$-series, we have

\begin{align}
F_a(q) =& \sum\limits_{k=0}^{\infty} g_a(k) q^k\\ =& \prod\limits_{m=0}^{\infty} (1 - q^{8 m + a})^{-1} (1 - q^{8 m + 4})^{-1} (1 - q^{8 m + 8 - a})^{-1}\\
=& \frac{1}{(q^a;q^8)_{\infty} (q^4;q^8)_{\infty} (q^{8 - a};q^8)_{\infty}},\label{generatingfunction1}
\end{align} with

\begin{equation}
(a;q)_{\infty} = \prod_{j=0}^{\infty}\left(1 - aq^j\right).
\end{equation}

Cauchy's residue theorem \cite[Chapter 3]{Stein} gives us a means of calculating---at least in principle---the value of $g_a(n)$.  Dividing $F_a(q)$ by $q^{n+1}$, we find that $g_a(n)$ is the coefficient of $q^{-1}$, and is therefore the residue of $F_a(q)/q^{n+1}$.

\begin{theorem}

\begin{equation}
g_a(n) = \frac{1}{2\pi i} \oint\limits_{\mathcal{C}} \frac{F_a(q)}{q^{n+1}} dq\label{theorcal},
\end{equation} for $\mathcal{C}$ some curve inside the unit circle of the $q$-plane, encompassing $q=0$.

\end{theorem}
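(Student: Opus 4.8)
The plan is to prove this as a direct application of Cauchy's residue theorem, for which the essential preliminary is to verify that $F_a(q)$ defines a function analytic throughout the open unit disk $|q|<1$. First I would examine the infinite product in \eqref{generatingfunction1}: since the exponents $8m+a$, $8m+4$, and $8m+8-a$ all tend to infinity with $m$, the associated series $\sum_m \left(q^{8m+a} + q^{8m+4} + q^{8m+8-a}\right)$ converges absolutely and uniformly on every compact subset of $|q|<1$. By the standard theory of infinite products this guarantees that the product converges to an analytic, non-vanishing function on the open disk, and that its Taylor expansion about $q=0$ is precisely the power series $\sum_{k\ge 0} g_a(k)\, q^k$.

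Next, I would fix $n$ and consider the meromorphic function $F_a(q)/q^{n+1}$. Because $F_a$ is analytic on the open disk with $F_a(0)=1$, the only singularity of $F_a(q)/q^{n+1}$ inside $|q|<1$ is a pole of order $n+1$ at the origin. Choosing $\mathcal{C}$ to be any positively oriented closed curve lying inside the unit circle and encircling $q=0$ exactly once, Cauchy's residue theorem then yields
\begin{equation*}
\frac{1}{2\pi i} \oint_{\mathcal{C}} \frac{F_a(q)}{q^{n+1}}\, dq = \operatorname*{Res}_{q=0} \frac{F_a(q)}{q^{n+1}}.
\end{equation*}
It remains to identify this residue. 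Writing $F_a(q) = \sum_{k\ge 0} g_a(k)\, q^k$, the quotient becomes $\sum_{k\ge 0} g_a(k)\, q^{k-n-1}$, whose coefficient of $q^{-1}$ is exactly $g_a(n)$; hence the residue equals $g_a(n)$, which is the claim.

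The one point that genuinely requires care---and which I regard as the main obstacle---is the justification for passing from the coefficient identification to the integral, namely the term-by-term integration of the series. On the compact curve $\mathcal{C}$ the power series for $F_a$ converges uniformly, being well inside the radius of convergence, so the interchange of summation and contour integration is legitimate, and each monomial integrates by the elementary identity $\oint_{\mathcal{C}} q^{j}\, dq = 2\pi i$ when $j=-1$ and $0$ otherwise. Everything else is routine, so the substance of the argument lies entirely in securing the analyticity of $F_a$ on the open disk and the uniform convergence that legitimizes these manipulations.
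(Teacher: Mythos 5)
Your proposal is correct and takes essentially the same approach as the paper, which simply observes that $g_a(n)$ is the coefficient of $q^{-1}$ in $F_a(q)/q^{n+1}$ and invokes Cauchy's residue theorem. The details you supply---analyticity and non-vanishing of the product on $|q|<1$, the order-$(n+1)$ pole at the origin, and the uniform convergence justifying term-by-term integration---are exactly the routine verifications the paper leaves implicit.
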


We must choose an appropriate contour for $\mathcal{C}$.  We then study $F_a(q)$ itself, including some of its useful transformation properties.  Next, we will employ the circle method in reducing our integral (\ref{theorcal}) to something far more accessible to integration.  We finish our integration using the theory of Bessel functions.

\section{Rademacher's Contour}

Casual inspection of (\ref{generatingfunction1}) suggests that $F_a(q)$ has important structure near the roots of unity of the unit circle.  We will construct a contour that remains inside the unit circle, but approaches the roots of unity $e^{2\pi i h/k}$ in a controlled way.  This contour was first used by Rademacher \cite{Rademacher2}.

\begin{definition}
\normalfont
For a given $h/k \in \mathcal{F}_N$, define the Ford circle $C(h,k)$ as the curve given by 

\begin{equation}
\left| \tau - \bigg(\frac{h}{k} + \frac{i}{2k^2}\bigg) \right| = \frac{1}{2k^2}\label{ford1}.
\end{equation}

\end{definition}

Given the set of Ford circles corresponding to the Farey sequence of degree $N$, let $\gamma (h,k)$ be defined as the upper arc of $C(h,k)$ from 

\begin{equation*}
\tau_I(h,k) = \frac{h}{k} - \frac{k_p}{k (k^2 + k_p^2)} + \frac{1}{k^2 + k_p^2} i
\end{equation*}

to

\begin{equation*}
\tau_T(h,k) = \frac{h}{k} + \frac{k_s}{k (k^2 + k_s^2)} + \frac{1}{k^2 + k_s^2} i,
\end{equation*} with $h_p/k_p$ and $h_s/k_s$ the immediate predecessor and successor (respectively) of $h/k \in \mathcal{F}_N$ (let $0_p/1_p = (N-1)/N$; similarly, let $(N-1)_s/N_s = 0/1$).

\begin{definition}
\normalfont
The Rademacher path of order $N$, $P(N)$, is the union of all upper arcs $\gamma (h,k)$ from $\tau = i$ to $\tau = i + 1$:

\begin{equation}
P(N) = \bigcup_{h/k \in \mathcal{F}_N} \gamma (h,k).
\end{equation}

\end{definition}

We give an illustration of $P(3)$ in Figure 1.

It may be easily demonstrated that consecutive Ford circles corresponding to $\mathcal{F}_N$ are tangent to one another, so that $P(N)$ is a connected curve.  Moreover, for $\tau$ in the upper arc $\gamma (h,k)$, $\Im(\tau)>0$; therefore, $\gamma(h,k)$ lies entirely in $\mathbb{H}$ for every $h/k \in \mathcal{F}_N$.  Therefore, $P(N)$ is a connected curve that lies entirely in $\mathbb{H}$.

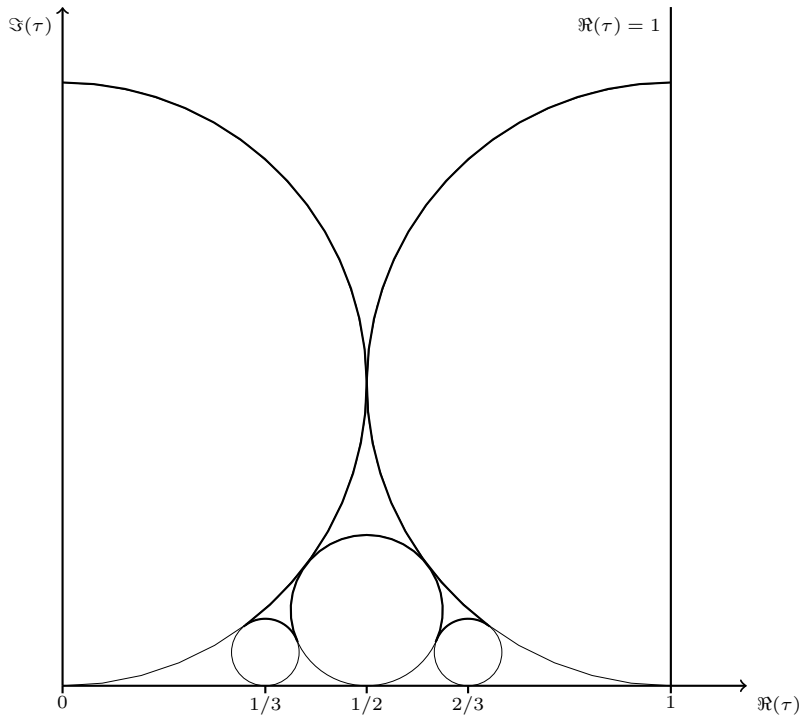
\begin{figure}[h]
\centering
\begin{tikzpicture}
    \begin{scope}[thick,font=\scriptsize][set layers]
    \draw [->] (-4,-4) -- (5,-4) node [below right]  {$\Re(\tau)$};
    \draw [->] (-4,-4) -- (-4,5) node [below left] {$\Im(\tau)$};
\draw [-] (4,-4) -- (4,5) node [below left] {$\Re(\tau)=1$};
\draw [-] (-4,-4.1) -- (-4,-4) node [below] {$0$};
\draw [-] (0,-4.1) -- (0,-4) node [below] {$1/2$};
\draw [-] (4,-4.1) -- (4,-4) node [below] {$1$};
\draw [-] (-4/3,-4.1) -- (-4/3,-4) node [below] {$1/3$};
\draw [-] (4/3,-4.1) -- (4/3,-4) node [below] {$2/3$};
\end{scope}
\draw [very thin,domain=-90:90] plot ({-4+4*cos(\x)}, {4*sin(\x)});
\draw [very thin,domain=90:270] plot ({4+4*cos(\x)}, {4*sin(\x)});
\draw [very thin] (0,-3) circle (1);
\draw [very thin] (-4/3,-32/9) circle (4/9);
\draw [thick,domain=-53:90] plot ({-4+4*cos(\x)}, {4*sin(\x)});
\draw [very thin] (4/3,-32/9) circle (4/9);
\draw [thick,domain=90:233] plot ({4+4*cos(\x)}, {4*sin(\x)});
\draw [thick,domain=20:130] plot ({-4/3+4/9*cos(\x)}, {-32/9+4/9*sin(\x)});
\draw [thick,domain=50:160] plot ({4/3+4/9*cos(\x)}, {-32/9+4/9*sin(\x)});
\draw [thick,domain=-25:205] plot ({cos(\x)}, {-3+sin(\x)});
\end{tikzpicture}
\caption{Ford circles $C(h,k)$ for $h/k\in\mathcal{F}_3$, with $P(3)$ highlighted.}\label{circlef1}
\end{figure}

So if we define $q = e^{2 \pi i \tau}$, then we may define our curve $\mathcal{C}$ from (\ref{theorcal}) as the preimage of $P(N)$.  We will make one more helpful change of variables:

\begin{equation}
\tau = \frac{h}{k} + \frac{iz}{k},
\end{equation} with $\Re(z) > 0$.  This change maps $C(h,k)$ (with $\gamma(h,k)$) to the circle

\begin{equation}
K_k^{(-)}: \left|z - \frac{1}{2k}\right| = \frac{1}{2k}\label{ford2}.
\end{equation}

Notice that the initial and terminal points of $\gamma(h,k)$ are mapped to $z_I(h,k)$ and $z_T(h,k)$ by the following:

\begin{equation}
\tau_I(h,k) \mapsto z_I(h,k) = \frac{k}{k^2+k_p^2} + \frac{k_p}{k^2+k_p^2}i,
\end{equation}

\begin{equation}
\tau_T(h,k) \mapsto z_T(h,k) = \frac{k}{k^2+k_s^2} - \frac{k_s}{k^2+k_s^2}i.
\end{equation}  We finish this section by referencing an important lemma, which can be proved quickly from the properties of the Farey fractions \cite{Hardy2}.

\begin{lemma}

Let $N\in\mathbb{N}$ be given, with $h/k \in \mathcal{F}_N$.  Let $z_I(h,k)$, $z_T(h,k)$ be the images of $\tau_I(h,k)$, $\tau_T(h,k),$ respectively, from $C(h,k)$ to $K_k^{(-)}$.  Then for any $z$ on the chord connecting $z_I(h,k)$ to $z_T(h,k)$, we have 

\begin{equation}
|z| = O\left(N^{-1}\right).
\end{equation}

\end{lemma}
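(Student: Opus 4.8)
The plan is to reduce everything to two explicit endpoint computations and then invoke a single standard property of the Farey sequence. First I would compute the moduli of the two given endpoints directly from their closed forms. The point of the exercise is that the expressions collapse: writing $z_I(h,k) = \frac{k}{k^2+k_p^2} + \frac{k_p}{k^2+k_p^2}i$, one finds
\begin{equation*}
|z_I(h,k)|^2 = \frac{k^2 + k_p^2}{(k^2+k_p^2)^2} = \frac{1}{k^2+k_p^2},
\end{equation*}
and in exactly the same way $|z_T(h,k)|^2 = \tfrac{1}{k^2+k_s^2}$. So the whole problem is to show that $k^2+k_p^2$ and $k^2+k_s^2$ both grow at least like $N^2$.

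The key input is the well-known property of consecutive Farey fractions that the denominators of neighbors in $\mathcal{F}_N$ satisfy $k + k_p > N$ and $k + k_s > N$. I would recall (or cite from \cite{Hardy2}) the standard mediant argument: if $k + k_p \le N$, then the mediant $(h_p + h)/(k_p + k)$ would itself be an element of $\mathcal{F}_N$ lying strictly between $h_p/k_p$ and $h/k$, contradicting the assumption that these are consecutive. The same reasoning applies to the successor. With this in hand, I would combine it with the elementary inequality $k^2 + k_p^2 \ge \tfrac{1}{2}(k + k_p)^2$ to obtain
\begin{equation*}
k^2 + k_p^2 \ge \tfrac{1}{2}(k+k_p)^2 > \tfrac{1}{2}N^2,
\end{equation*}
and hence $|z_I(h,k)| < \sqrt{2}/N$, with the identical bound holding for $|z_T(h,k)|$.

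Finally I would pass from the endpoints to the whole chord by convexity of the modulus. Any $z$ on the chord can be written as $z = (1-t)\,z_I(h,k) + t\,z_T(h,k)$ for some $t \in [0,1]$, so the triangle inequality gives
\begin{equation*}
|z| \le (1-t)\,|z_I(h,k)| + t\,|z_T(h,k)| < \frac{\sqrt{2}}{N},
\end{equation*}
uniformly in $h$, $k$, and $t$. This establishes $|z| = O(N^{-1})$ as claimed.

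I do not expect a genuine obstacle here: once the endpoint moduli are seen to telescope to $1/\sqrt{k^2+k_p^2}$, the result is essentially forced. The only point requiring care is the uniformity of the estimate — the bound must not depend on $k$ — and this is precisely what the Farey inequality $k + k_p > N$ secures, since it is the one place where the order $N$ enters. If anything is delicate, it is making sure the boundary conventions for the first and last fractions (where one takes $0_p/1_p = (N-1)/N$ and $(N-1)_s/N_s = 0/1$) are consistent with the neighbor-denominator bound, but these wrap-around cases satisfy the same inequality and so cause no difficulty.
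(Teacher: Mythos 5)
Your proof is correct and is precisely the standard argument the paper alludes to: the paper itself omits the proof, remarking only that the lemma ``can be proved quickly from the properties of the Farey fractions'' \cite{Hardy2}, and your computation $|z_I(h,k)|^2 = 1/(k^2+k_p^2)$ combined with the neighbor-denominator bound $k+k_p > N$, the inequality $k^2+k_p^2 \ge \tfrac{1}{2}(k+k_p)^2$, and convexity along the chord is exactly that classical route (as in Rademacher's treatment). Your attention to the wrap-around conventions $0_p/1_p = (N-1)/N$ and $(N-1)_s/N_s = 0/1$ is also handled correctly, since those pairs still satisfy $k+k_p,\,k+k_s = N+1 > N$.
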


\section{Transformation Equations}

We begin by expressing $F_a(q)$ in terms of automorphic forms---in particular, as a quotient of eta functions by a theta function.  Let $q=e^{2\pi i\tau}$, with $\tau$ a complex variable, $\Im(\tau) > 0$.

Recall that Ramanujan's theta function \cite[Chapter 1]{Berndt} has the following product expansion:

\begin{equation}
f(-q^{\alpha},-q^{\beta}) = (q^{\alpha};q^{\alpha + \beta})_{\infty} (q^{\beta};q^{\alpha + \beta})_{\infty} (q^{\alpha + \beta};q^{\alpha + \beta})_{\infty}\label{ramanujanthetaformula}.
\end{equation}

Moreover, Ramanujan's theta function is related to the standard theta function $\vartheta_1$ by the following, which can be verified by the series representations of both functions \cite[Chapter 10]{Rademacher3}:

\begin{equation}
f(-q^{\alpha},-q^{\beta}) = -i e^{\pi i \tau (3\alpha - \beta)/4} \vartheta_1 (\alpha \tau | (\alpha + \beta)\tau).
\end{equation}

We then have

\begin{align}
F_a(q) =& \frac{(q^8;q^8)_{\infty}^2}{(q^4;q^4)_{\infty} f(-q^a,-q^{8-a})}\\ =& i \exp(\pi i \tau (2 - a)) \frac{(q^8;q^8)_{\infty}^2}{(q^4;q^4)_{\infty} \vartheta_1(a\tau | 8\tau)}.
\end{align}

Since $(q^{\alpha};q^{\alpha})_{\infty} = e^{-\alpha\pi i\tau/12} \eta (\alpha\tau)$, we can rewrite the remaining $q$-Pochhammer symbols in terms of eta functions in the following way:

\begin{equation}
F_a(q) = i \exp(\pi i\tau (1-a)) \frac{\eta(8\tau)^2}{\eta(4\tau) \vartheta_1(a\tau|8\tau)}.
\end{equation}

We will now study the behavior of $F_a(q)$ near the arbitrary singularity $e^{2\pi i h/k}$, with $0\le h < k$, and $(h,k)=1$.  To do this, we will divide our work into four cases, depending on the divisibility properties of $k$ with respect to $8$, and then take advantage of the modular symmetries of the $\eta$ and $\vartheta_1$ functions.

\subsection{$GCD(k,8) = 8$}

The simplest transformation formula relevant to our problem occurs for $(k,8) = 8$.  Let $H_8$ be defined as the negative inverse of $h$ modulo $16k$:

\begin{equation}
hH_8 \equiv -1 \pmod {16k}\label{inversecase8}.
\end{equation}

Notice that since $8|k$ by hypothesis, and $(h,k) = 1$, therefore $(h,16k) = (h,k) = 1$, so that $H_8$ exists.  Then the following  are elements of $SL(2,\mathbb{Z})$:

\begin{align}
\begin{pmatrix} h & -\frac{8}{k} (hH_8 + 1) \\ \frac{k}{8} & -H_8 \end{pmatrix}\label{matrix8c8},
\end{align}

\begin{align}
\begin{pmatrix} h & -\frac{4}{k} (hH_8 + 1) \\ \frac{k}{4} & -H_8 \end{pmatrix}\label{matrix8c4},
\end{align}

We will allow 

\begin{equation}
\tau' = \frac{H_8}{k} + \frac{iz^{-1}}{k}.
\end{equation}

Applying (\ref{matrix8c8}) as a modular transformation to $8\tau'$, we have

\begin{equation*}
\frac{8h\tau' - \frac{8}{k}(hH_8 + 1)}{8\frac{k}{8}\tau' - H_8} = 8\tau.
\end{equation*}

Similarly, applying (\ref{matrix8c4}) to $4\tau'$, we get $4\tau$.

Therefore, we will transform $\eta(8\tau)$ to $\eta(8\tau')$, using (\ref{matrix8c8}).  Similarly, we transform $\eta(4\tau)$ to $\eta(4\tau')$ using (\ref{matrix8c4}).  

Invoking these transformations, we must contend with the roots of unity associated with the $\eta$ and $\vartheta_1$ functions.  As a shorthand, we will refer to the roots of unity as the following:

\begin{equation}
\epsilon(8,8) = \epsilon\left(h,-\frac{8}{k}(hH_8 + 1),\frac{k}{8},-H_8\right)\label{appbeta81},
\end{equation}

\begin{equation}
\epsilon(8,4) = \epsilon\left(h,-\frac{4}{k}(hH_8 + 1),\frac{k}{4},-H_8\right)\label{appbeta82},
\end{equation} where $\epsilon(a,b,c,d)$ is the root of unity given by 

\begin{equation}
\epsilon(a,b,c,d) =
  \begin{cases}
  \big(\frac{d}{c}\big) i^{(1 - c)/2} \exp\left(\frac{\pi i}{12}(bd(1-c^2)+c(a+d))\right), &\quad 2 \nmid c \\
    \big(\frac{c}{d}\big) \exp\left(\frac{\pi i d}{4} + \frac{\pi i}{12}(ac(1-d^2)+d(b-c))\right), &\quad 2 \nmid d \\
  \end{cases},\label{etaroot}\end{equation} and $\big(\frac{m}{n}\big)$ is the Legendre--Jacobi character.  See \cite[Chapter 9]{Rademacher3}.

Invoking the functional equation for $\eta$ \cite[Chapter 9]{Rademacher3}, it follows that

\begin{equation}
\frac{\eta(8\tau)^2}{\eta(4\tau)} = \frac{1}{z^{1/2}} \frac{\epsilon(8,8)^2}{\epsilon(8,4)} \frac{\eta(8\tau')^2}{\eta(4\tau')}\label{etas8}.
\end{equation}

Handling $\vartheta_1$ turns out to be more difficult, due to the presence of a second complex variable.  We will mimic our work with $\eta(8\tau)$, using (\ref{matrix8c8}), and setting 

\begin{equation}
v= a\tau i z^{-1} = \frac{a(hiz^{-1} - 1)}{k}\label{v8}.
\end{equation}  The functional equation for $\vartheta_1$ \cite[Chapter 10]{Rademacher3} gives us

\begin{equation}
\vartheta_1(a\tau|8\tau) = \vartheta_1\bigg( \frac{v}{iz^{-1}} \bigg| 8\tau \bigg) = -i \epsilon(8,8)^3 \frac{1}{z^{1/2}} e^{z \pi a^2(hiz^{-1} - 1)^2/8k} \vartheta_1\left(v|8\tau' \right)\label{theta8s1}.
\end{equation}

Recall that $hH_8 \equiv -1 \pmod {16k}$.  We can therefore write 

\begin{equation}
-1 = hH_8 + 16kM,
\end{equation} with $M\in\mathbb{Z}$.  We then have 

\begin{equation}
v = ah\tau' + 16aM\label{veven}.
\end{equation}  If we also take advantage of the fact that $\vartheta_1(v+1|\tau) = -\vartheta_1(v|\tau)$ \cite[Chapter 10]{Rademacher3}, then we have

\begin{equation}
\vartheta_1\left(v|8\tau' \right) = \vartheta_1\left(ah\tau' + 16aM| 8\tau' \right) = \vartheta_1\left(ah\tau' | 8\tau'\right)\label{theta8s2}.
\end{equation}

Again considering that $(k,8) = 8$ and $(h,k) = 1$, and $a=1,3$, we also have $ah \equiv 1,3,5,7 \pmod 8$.  We therefore write

\begin{equation}
\vartheta_1(ah\tau' | 8\tau') = \vartheta_1(b\tau' + 8N\tau' | 8\tau')\label{appbB8},
\end{equation} with $b$ the least positive residue of $ah$ modulo 8.

We now make use of the fact that for $N\in\mathbb{N}$,

\begin{equation}
\vartheta_1(v+N\tau|\tau) = (-1)^N\exp\left(-\pi i N(2v + N\tau)\right)\vartheta_1(v|\tau)\label{addNvarfortheta}
\end{equation} \cite[Chapter 10]{Rademacher3}, so that

\begin{equation}
\vartheta_1(ah\tau' | 8\tau') = (-1)^N \exp(-\pi i N(2b\tau' + 8N\tau')) \vartheta_1(b\tau'|8\tau')\label{theta8s3}.
\end{equation}

Combining (\ref{theta8s1}), (\ref{theta8s2}), (\ref{theta8s3}), and inverting, we have the following:

\begin{equation}
\frac{1}{\vartheta_1(a\tau|8\tau)} =  i \frac{(-1)^N}{\epsilon(8,8)^3} {z^{1/2}} e^{-z \pi a^2(hiz^{-1} - 1)^2/8k} \frac{\exp(\pi i N(2b\tau' + 8N\tau'))}{\vartheta_1(b\tau'|8\tau')}\label{theta8s4}.
\end{equation}

We now have sufficient information, in (\ref{etas8}), (\ref{theta8s4}), to reassemble the transformed generating function.

\begin{align}
F_a(q) =& i\exp(\pi i \tau(1 - a)) \frac{1}{z^{1/2}} \frac{\epsilon(8,8)^2}{\epsilon(8,4)} \frac{\eta(8\tau')^2}{\eta(4\tau')} \frac{1}{\vartheta_1(a\tau|8\tau)}\\
&\times \exp(\pi i N(2b\tau' + 8N\tau'))\frac{\eta(8\tau')^2}{\eta(4\tau')\vartheta_1(b\tau'|8\tau')}\label{gen8s1}.
\end{align}

Here $b=1,3,5,7$.  However, noting from (\ref{ramanujanthetaformula}) that 

\begin{equation}
f(-q^{\alpha},-q^{\beta}) = f(-q^{\beta},-q^{\alpha}),
\end{equation} we may define $F_5(q)=F_3(q),$ $F_7(q)=F_1(q)$.  We therefore have

\begin{multline}
F_a(q) = \frac{i(-1)^{N}}{\epsilon(8,8)\epsilon(8,4)} \times\exp\bigg(\pi i \tau(1 - a) -z \pi a^2(hiz^{-1} - 1)^2/8k\\ + \pi i N(2b\tau' + 8N\tau')+\pi i\tau'(a - 1)\bigg)F_b(y)\label{gen8s2},
\end{multline} with $y=\exp( 2\pi i \tau')$.

Remembering that

\begin{equation}
N = \left\lfloor \frac{ah}{8} \right\rfloor = \frac{ah - b}{8}\label{appbN8},
\end{equation} and that 

\begin{equation}
a^2 - 4a + 3 = (a - 1)(a - 3) = 0,
\end{equation} we may collect and reorganize the coefficients of 1, $z$, and $1/z$ in the exponential of (\ref{gen8s2}).  Doing so gives the following transformation formula:

\begin{equation}
F_a(q) = \omega_{a,8} (h,k) \exp\left(\frac{\pi}{8k} \left(\frac{(b-4)^2-8}{z} + z(4a-5)\right)\right) F_b (y)\label{gen8final},
\end{equation} where

\begin{equation}
\omega_{a,8} (h,k) = \frac{i (-1)^{\lfloor \frac{ah}{8} \rfloor}}{\epsilon (8,8) \epsilon (8,4)} \exp\left(\frac{\pi i}{8k} (h (5 - 4a) - H_8((b-4)^2-8))\right)\label{rootun8}.
\end{equation}

We note that we can extend this result to prove the modularity of $F_a(q)$ relative to a certain subgroup of $SL(2,\mathbb{Z})$.  We do not give the proof here.

\subsection{$GCD(k,8) < 8$}

The result of the Section 3.1 suggests that $F_a(q)$ is modular, at least with respect to a subgroup of the modular group.  While such a property does not carry over exactly to the remaining 3 cases, it is only necessary to show that $F_a(q) = f(z) \Psi(y)$, with $\Psi(q)$ a suitable quotient of $q$-series.

For each case $(k,8)=d$, we will define

\begin{equation}
\tau' = \frac{H_d}{k} + \frac{diz^{-1}}{8k},
\end{equation} where

\begin{equation}
\frac{8hH_d}{d} \equiv -1 \pmod {k/d}\label{inverseDefn},
\end{equation}  and

\begin{equation}
y = e^{2\pi i\tau'}.
\end{equation}  We consider the following matrices, which can easily be shown to be in $SL(2,\mathbb{Z})$:

\begin{align}
\begin{pmatrix}8h/d & -\frac{d}{k} (8hH_d/d + 1) \\ k/d & -H_d \end{pmatrix}\label{matrix4c8},
\end{align}

\begin{align}
\begin{pmatrix} 4h/d & -\frac{d}{k} (8hH_d/d + 1) \\ k/d & -2H_d \end{pmatrix}\label{matrix4c4}.
\end{align}  We also define

\begin{equation}
\epsilon(d,8) = \epsilon\left(8h/d,-\frac{d}{k}(8hH_d/d + 1),\frac{k}{d},-H_d\right),
\end{equation}

\begin{equation}
\epsilon(d,4) = \epsilon\left(4h/d,-\frac{d}{k}(8hH_d/d + 1),\frac{k}{d},-2H_d\right),
\end{equation} with $\epsilon(a,b,c,d)$ defined by (\ref{etaroot}).  Remembering (\ref{inverseDefn}), we also let

\begin{equation}
v = \frac{da(hiz^{-1} - 1)}{8k} = ah\tau' + \frac{aM}{8},
\end{equation} with $M\in\mathbb{Z}$.  Finally, we write

\begin{equation}
\rho_{a,d} = \exp\left( \frac{\pi i a d}{4k} \left( \frac{8hH_d}{d} + 1 \right) \right).
\end{equation}

\subsubsection{$GCD(k,8) = 4$}

With $d=4$, we apply (\ref{matrix4c8}) to $4\tau'$, and (\ref{matrix4c4}) to $8\tau'$, so that we have

\begin{equation}
\frac{\eta(8\tau)^2}{\eta(4\tau)} = \frac{1}{2z^{1/2}} \frac{\epsilon(4,8)^2}{\epsilon(4,4)} \frac{\eta(4\tau')^2}{\eta(8\tau')}\label{etas4}.
\end{equation}

As with the case of $d=8$, $\vartheta_1$ requires the most work by far.  The initial transformation through (\ref{matrix4c8}) gives us

\begin{equation}
\vartheta_1(a\tau|8\tau) = -i \epsilon(4,8)^3 \frac{1}{(2z)^{1/2}} e^{z \pi a^2(hiz^{-1} - 1)^2/8k} \vartheta_1\left(v|4\tau'\right)\label{theta4s1}.
\end{equation}  And

\begin{equation}
\vartheta_1\left(v | 4\tau' \right) = \vartheta_1\left(ah\tau' + \frac{aM}{8} \bigg| 4\tau'\right)\label{c4adjustv}.
\end{equation}

We may now allow $b\equiv ah \pmod 4$, letting $ah = 4N + b$, so that (\ref{c4adjustv}), together with (\ref{addNvarfortheta}), gives

\begin{multline}
\vartheta_1\left(v | 4\tau'\right) =  (-1)^N \exp(-\pi i N (2\tau'(2N + b) + aM/4))\\ \times\vartheta_1\left(b\tau' + \frac{aM}{8} \bigg| 4\tau'\right)\label{theta4s2}.
\end{multline}

We now shift from $\vartheta_1$ to $\vartheta_4$ \cite[Chapter 10]{Rademacher3}:

\begin{equation*}
\vartheta_1(v|\tau) = i \exp(-\pi i\tau/4 - \pi i v) \vartheta_4(v|\tau).
\end{equation*}

Write

\begin{align}
\vartheta_1\left(b\tau' + \frac{aM}{8} \bigg| 4\tau'\right) =& \vartheta_1\left((b - 2)\tau' + \frac{aM}{8} + 2\tau' \bigg| 4\tau'\right)\\ =& i \exp(-\pi i((b - 1)\tau' + aM/8))\\ &\times \vartheta_4\left((b - 2)\tau' + \frac{aM}{8} \bigg| 4\tau'\right)\label{theta4s3}.
\end{align}

We now express $\vartheta_4$ as an infinite product \cite[Chapter 10]{Rademacher3}:

\begin{align}
\vartheta_4\left((b - 2)\tau' + \frac{aM}{8} \bigg| 4\tau'\right) &= \prod\limits_{m=1}^{\infty} (1 - y^{4 m})(1 - \rho_{a,4} y^{4 m - 4 + b})(1 - \rho_{a,4}^{-1}y^{4 m - b})\notag \\ &= (y^4;y^4)_{\infty}(\rho_{a,4} y^b;y^4)_{\infty} (\rho_{a,4}^{-1}y^{4 - b};y^4)_{\infty}\label{theta4s4}.
\end{align}

Combining (\ref{etas4}), (\ref{theta4s1}), (\ref{theta4s2}), (\ref{theta4s3}), (\ref{theta4s4}), and simplifying, we have

\begin{equation}
F_a(q) = \frac{1}{\sqrt{2}} \omega_{a,4} (h,k) \exp\left(\frac{\pi}{8k} \left(\frac{1}{z} + z(4a-5)\right)\right) \Psi_{a,4} (y)\label{gen4final},
\end{equation} with

\begin{equation}
\Psi_{a,4}(q) = \frac{(q^4;q^4)_{\infty}^2}{(q^8;q^8)_{\infty}f(-\rho_{a,4} q^b;-\rho_{a,4}^{-1}q^{4-b})}\label{psi4},
\end{equation} and

\begin{multline}
\omega_{a,4} (h,k) = \frac{i (-1)^{\lfloor \frac{ah}{4} \rfloor}}{\epsilon (4,8) \epsilon (4,4)}\\ \times \exp\left( \frac{\pi i}{4k} \left(h - H_4 - h (4a - 3) (hH_4 + 1) + a (2hH_4 + 1) (b-2)\right)\right)\label{rootun4}.
\end{multline}

\subsubsection{$GCD(k,8) = 2$}

With $d=4$, we apply (\ref{matrix4c8}) to $2\tau'$, and (\ref{matrix4c4}) to $4\tau'$, so that we have

\begin{equation}
\frac{\eta(8\tau)^2}{\eta(4\tau)} = \frac{1}{2\sqrt{2}z^{1/2}} \frac{\epsilon(2,8)^2}{\epsilon(2,4)} \frac{\eta(2\tau')^2}{\eta(4\tau')}\label{etas2}.
\end{equation}

Once again, $\vartheta_1$ requires the most work by far.  The initial transformation through (\ref{matrix4c8}) gives us

\begin{equation}
\vartheta_1(a\tau|8\tau) = -i \epsilon(2,8)^3 \frac{1}{2z^{1/2}} e^{z \pi a^2(hiz^{-1} - 1)^2/8k} \vartheta_1\left( v | 2\tau' \right)\label{theta2s1}.
\end{equation}  And

\begin{equation}
\vartheta_1\left(v | 2\tau' \right) = \vartheta_1\left(ah\tau' + \frac{aM}{8} \bigg| 2\tau'\right).
\end{equation}

Notice that both $a$ and $h$ are odd.  We may therefore write $ah = 2N + 1$, so that

\begin{equation}
\vartheta_1\left(v| 2\tau'\right) =  (-1)^N \exp(-\pi i N (2\tau' + aM/4 + 2\tau'N))\vartheta_1\left(\tau' + \frac{aM}{4} \bigg| 2\tau'\right)\label{theta2s2}.
\end{equation}

We now shift from $\vartheta_1$ to $\vartheta_4$.  Write

\begin{align}
\vartheta_1\left(\tau' + \frac{aM}{8} \bigg| 2\tau'\right) &= i \exp\left(\frac{-\pi i}{8}(12\tau' + aM/8)\right) \vartheta_4\left(\frac{aM}{8} \bigg| 2\tau'\right)\label{theta2s3}.
\end{align}

We express $\vartheta_4$ as an infinite product:

\begin{equation}
\vartheta_4\left(\frac{aM}{8} \bigg| 2\tau'\right) = (y^2;y^2)_{\infty}(\rho_{a,2} y;y^2)_{\infty} (\rho_{a,2}^{-1}y;y^2)_{\infty}\label{theta2s4}.
\end{equation}  Combining (\ref{etas2}), (\ref{theta2s1}), (\ref{theta2s2}), (\ref{theta2s3}), (\ref{theta2s4}), and simplifying, we have

\begin{equation}
F_a(q) = \frac{1}{\sqrt{2}} \omega_{a,2} (h,k) \exp\left(\frac{\pi}{8k} \left( z(4a - 5)\right)\right) \Psi_{a,2} (y)\label{gen2final},
\end{equation} where

\begin{equation}
\Psi_{a,2}(q) = \frac{(q^2;q^2)_{\infty}^2}{(q^4;q^4)_{\infty}f(-\rho_{a,2} q;-\rho_{a,2}^{-1}q)}\label{psi2},
\end{equation} and

\begin{equation}
\omega_{a,2} (h,k) = \frac{i(-1)^{\lfloor \frac{ah}{2} \rfloor}}{\epsilon(2,8)\epsilon(2,4)}\exp\left( \frac{\pi i}{4k}\left( 1 - (4a - 3)(2hH_2 + 1) \right) \right)\label{rootun2}.
\end{equation}

\subsection{$GCD(k,8) = 1$}

With $d=4$, we apply (\ref{matrix4c8}) to $2\tau'$, and (\ref{matrix4c4}) to $4\tau'$, so that we have

\begin{equation}
\frac{\eta(8\tau)^2}{\eta(4\tau)} = \frac{1}{4z^{1/2}} \frac{\epsilon(1,8)^2}{\epsilon(1,4)} \frac{\eta(\tau')^2}{\eta(2\tau')}\label{etas1}.
\end{equation}

Returning to $\vartheta_1$,

\begin{equation}
\vartheta_1(a\tau|8\tau) = -i \epsilon(1,8)^3 \frac{1}{2\sqrt{2}z^{1/2}} e^{8\pi k z v^2} \vartheta_1\left( v | \tau' \right)\label{theta1s1}.
\end{equation}  And

\begin{equation}
\vartheta_1\left( v | \tau' \right) = \vartheta_1\left( ah\tau' + \frac{aM}{8} \bigg| \tau' \right)\label{theta1s2}.
\end{equation}

Recognizing that we may extract $ah\tau'$ altogether from our first variable, and recognizing that $(-1)^{ah} = (-1)^h$, we have

\begin{equation}
\vartheta_1\left(v| \tau' \right) =  (-1)^{h} \exp(-\pi i ah (ah\tau' + aM/4))\vartheta_1\left(\frac{aM}{8} \bigg| \tau'\right)\label{theta1s3}.
\end{equation}

We may now write $\vartheta_1\left(\frac{aM}{8} \bigg| \tau'\right)$ in its classic product form \cite[Chapter 10]{Rademacher3}:

\begin{align}
\vartheta_1\left(\frac{aM}{8}\bigg|\tau'\right) &= 2 e^{\pi i \tau'/4}\sin(\pi aM/8)\notag \\ \times \prod_{m=1}^{\infty}&(1-e^{2\pi i m \tau'})(1-e^{2\pi i m \tau' + 2\pi i aM/8}) (1-e^{2\pi i m \tau' - 2\pi i aM/8})\label{theta1s4}\\ &= 2 e^{\pi i \tau'/4}\sin(\pi aM/8)(y;y)_{\infty}(\rho_{a,1} y;y)_{\infty}(\rho_{a,1}^{-1}y;y)_{\infty}\label{theta1s5}.
\end{align}

Examining the sine function, let $aM = 8N + c$, with $c$ the least positive residue of $aM \pmod 8$.  Then

\begin{equation}
\sin\left(\frac{\pi aM}{8}\right) = (-1)^{N} \sin\left(\frac{\pi c}{8}\right)\label{sine1}.
\end{equation}

Notice that $\sin\left(\frac{\pi c}{8}\right) > 0$.  We know that since

\begin{equation}
M = -\frac{1}{k}(8hH_1 + 1),
\end{equation} and since $(k,8)=1$, therefore

\begin{equation}
c \equiv -ak^{-1} \pmod 8.
\end{equation}  Moreover, $k$ is odd, so $k^{-1}\equiv k \pmod 8$.  So

\begin{equation}
\sin\left(\frac{\pi c}{8}\right) = \left| \sin\left( \frac{\pi ak}{8} \right) \right|.
\end{equation}

Combining (\ref{etas1}), (\ref{theta1s1}), (\ref{theta1s2}), (\ref{theta1s3}), (\ref{theta1s4}), (\ref{theta1s5}), (\ref{sine1}), and simplifying, we have:

\begin{equation}
F_a(q) = \frac{1}{2\sqrt{2}} \omega_{a,1} (h,k) \left|\csc \left(\frac{\pi a k}{8}\right) \right| \exp\left(\frac{\pi}{8k} \left(\frac{1}{4z} + z(4a - 5)\right)\right) \Psi_{a,1} (y)\label{gen1final},
\end{equation} where

\begin{equation}
\Psi_{a,1}(y) = \frac{(y;y)_{\infty}}{(y^2;y^2)_{\infty}(\rho_{a,1} y;y)_{\infty}(\rho_{a,1}^{-1}y;y)_{\infty}}\label{psi1},
\end{equation} and

\begin{multline}
\omega_{a,1} (h,k) = \frac{(-1)^{\lfloor \frac{-a(8hH_1 + 1)}{8k} \rfloor + h - 1}}{\epsilon (1,8) \epsilon (1,4)}\\ \times \exp\left( \frac{\pi i}{4k} \left(4h (1 - a + hH_1(3 - 4a)) - H_1\right)\right)\label{rootun1}.
\end{multline}

\section{Integration}

Recall from Section 1 that

\begin{equation*}
g_a(n) = \frac{1}{2\pi i} \oint\limits_{\mathcal{C}} \frac{F_a(q)}{q^{n+1}} dq,
\end{equation*} while in Section 2 we described a contour for $\mathcal{C}$ that will prove useful for integration.  We will now begin the integration proper.

Let $N$ be some large positive integer, and let the corresponding Rademacher curve $P(N)$ be given.  Then we have the following:

\begin{equation}
g_a(n) = \frac{1}{2\pi i} \oint\limits_{\mathcal{C}} \frac{F_a(q)}{q^{n+1}} dq = \sum\limits_{k = 1}^{N} \sum_{\substack{0 \le h < k, 
\\ (h,k) = 1}} \frac{1}{2\pi i} \int\limits_{\gamma (h,k)} \frac{F_a(q)}{q^{n+1}} dq.
\end{equation}

In Section 3, we gave transformation equations for $F_a(q)$ depending on the divisibility properties of $k$.  We now separate our integral into the corresponding cases:

\begin{equation}
g_a(n) = g_a^{(8)} (n) + g_a^{(4)} (n) + g_a^{(2)} (n) + g_a^{(1)} (n),
\end{equation} with

\begin{equation}
g_a^{(d)}(n) = \sum_{\substack{(k,8) = d, \\ k \le N}} \sum_{\substack{0 \le h < k, \\ (h,k) = 1}} \frac{1}{2\pi i} \int\limits_{\gamma (h,k)} \frac{F_a(q)}{q^{n+1}} dq.
\end{equation}

In each case, we will tranform $F_a(q)$ by the following:

\begin{align}
g_a^{(d)}(n) =&  \sum_{\substack{(k,8)=d \\ k \le N}}\frac{i}{k} \sum_{\substack{0 \le h < k, \\ (h,k) = 1}} e^{-2\pi i n h/k}\notag \\ &\times \int\limits_{z_I(h,k)}^{z_T(h,k)} F_a\left(\exp\left(2\pi i \left(\frac{h}{k} + \frac{iz}{k}\right)\right)\right)e^{2\pi n z/k} dz\\ =& 2^{(\alpha - 3)/2} \sum_{\substack{(k,8)=d \\ k \le N}}\frac{i}{k} T_{a,d}(k) \sum_{\substack{0 \le h < k, \\ (h,k) = 1}}\omega_{a,d}(h,k) e^{-2\pi i n h/k}\notag \\ &\times \int\limits_{z_I(h,k)}^{z_T(h,k)} \exp\left(\frac{\pi}{8k}\left(\frac{\Lambda(a,d)}{z} + z(16n + 4a - 5)\right)\right) \Psi_{a,d}(y) dz,
\end{align} with $\omega_{a,d}(h,k)$ defined by (\ref{rootun8}), (\ref{rootun4}), (\ref{rootun2}), (\ref{rootun1}), $\Psi_{a,d}(y)=\sum_{j=0}^{\infty}\psi_{a,d}(j)y^j$ defined as $F_b(y)$ for $d=8$, and (\ref{psi4}), (\ref{psi2}), (\ref{psi1}), otherwise (note that $\psi_{a,d}(0)=1$);

\[
 \Lambda(a,d) = 
  \begin{cases} 
   (b-4)^2-8 & \text{if } d = 8 \\
   1       & \text{if } d = 4 \\
   0       & \text{if } d = 2 \\
   1/4    & \text{if } d = 1.
  \end{cases}
\]

\begin{equation}
\alpha = \log_2(d),
\end{equation} and

\[
 T_{a,d}(k) = 
  \begin{cases} 
   |\csc(\pi a k/8)| & \text{if } d = 1 \\
   1       & \text{otherwise}.
  \end{cases}
\]

Thereafter,

\begin{multline}
g_a^{(d)} (n) = 2^{(\alpha - 3)/2} \sum_{\substack{(k,8)=d \\ k \le N}} \frac{i}{k} T_{a,d}(k) \sum_{\substack{0 \le h < k, \\ (h,k) = 1}} \omega_{a,d}(h,k) e^{-2\pi i n h/k}\\ \times \left(I_{a,d}^{(1)}(h,k) + I_{a,d}^{(0)}(h,k)\right),
\end{multline} where

\begin{equation}
I_{a,d}^{(1)}(h,k) = \int\limits_{z_I(h,k)}^{z_T(h,k)} \exp\left( \frac{\pi}{8k} \left( \frac{\Lambda(a,d)}{z} + z(16n+4a-5) \right) \right) dz,
\end{equation} and

\begin{equation}
I_{a,d}^{(0)} (h,k) = \int\limits_{z_I(h,k)}^{z_T(h,k)} \exp\left( \frac{\pi}{8k} \left( \frac{\Lambda(a,d)}{z} + z(16n+4a-5) \right) \right) \sum\limits_{j=1}^{\infty} \psi_{a,d}(j)y^j dz.
\end{equation}

In each of our cases, we will show that $I_{a,d}^{(0)} (h,k)$ will contribute nothing to our final formula.

\begin{lemma}
For $d=8,4,2,1$,

\begin{equation}
\left|\sum_{\substack{0 \le h < k, \\ (h,k) = 1}}\omega_{a,d}(h,k)e^{-2\pi i n h/k}\right| = O\left( k^{2/3 + \epsilon} n^{1/3} \right).
\end{equation}
\end{lemma}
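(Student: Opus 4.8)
The plan is to recognize each inner sum as a Kloosterman sum in disguise and then bound it by elementary means. The first step is to make the phase of $\omega_{a,d}(h,k)$ fully explicit. Each $\epsilon(\cdot,\cdot,\cdot,\cdot)$ in (\ref{rootun8})--(\ref{rootun1}) is, by (\ref{etaroot}), a Dedekind-sum multiplier: writing $\epsilon(a,b,c,d)=\exp(\pi i(\frac{a+d}{12c}-s(d,c)-\frac14))$ with $s(\cdot,\cdot)$ the Dedekind sum, I would collect every Dedekind sum and every explicit exponential into a single phase. The decisive structural observation is that $H_d$ is, by (\ref{inversecase8}) and (\ref{inverseDefn}), a multiplicative inverse of (a fixed multiple of) $h$ --- modulo $16k$ when $d=8$ and modulo $k/d$ otherwise. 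Thus, writing $\bar h$ for the inverse of $h$ to the appropriate modulus, the summand $\omega_{a,d}(h,k)e^{-2\pi i nh/k}$ takes the shape $c_k\,e^{2\pi i(\nu h+\mu\bar h)/k}\,e^{\pi i(\text{Dedekind sums})}$, where $\nu\equiv -n+O(1)$, $\mu=O(1)$ independent of $n$, and $c_k$ is unimodular and independent of $h$.

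The second step is to absorb the Dedekind-sum phases into the two frequencies. I would apply Dedekind reciprocity together with the congruence linearizing $12k\,s(h,k)$ in $h$ and $\bar h$, converting the product of the $\epsilon$'s into additional linear-in-$h$ and linear-in-$\bar h$ contributions. After this the inner sum becomes a genuine, complete Kloosterman sum
\[
S_{a,d}(k)=c_k'\sum_{\substack{0\le h<k\\(h,k)=1}} e^{2\pi i(\tilde\nu h+\tilde\mu\bar h)/k},\qquad |c_k'|=1,
\]
with $\tilde\nu\equiv-n\pmod{k}$ up to a bounded shift and $\tilde\mu=O(1)$. Two bookkeeping points need care: the signs $(-1)^{\lfloor ah/8\rfloor}$ and $(-1)^{\lfloor\cdots\rfloor}$ appearing in (\ref{rootun8})--(\ref{rootun1}) are periodic in $h$ with bounded period, so they can be removed by splitting the residues $h$ into a bounded number of arithmetic progressions; and for $d<8$ the inverse lives modulo $k/d$, so after grouping $h$ by its residue modulo $k/d$ the sum reduces to a Kloosterman sum to that modulus, changing the effective modulus only by the bounded factor $d$.

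The third step is the estimate itself. Rather than invoking Weil's bound, I would bound $S_{a,d}(k)$ elementarily, which is what forces the exponent $2/3$ in place of the sharper $1/2$. Using the twisted multiplicativity of Kloosterman sums I would factor over the prime powers $p^\lambda\,\|\,k$; for $\lambda\ge 2$ the local sum admits an exact evaluation by completing the square (and vanishes unless a congruence in $\tilde\nu,\tilde\mu$ holds), while the prime case $\lambda=1$ is handled by an elementary solution-counting (higher-moment) estimate, which is precisely what produces the exponent $2/3$. Collecting the local contributions and tracking the places where $p\mid n$ forces the $h$-frequency to degenerate into a Ramanujan-type sum --- bounded trivially by $p^{\lambda}$ rather than $p^{2\lambda/3}$ --- yields a bound of the form $(n,k)^{1/3}k^{2/3+\epsilon}$, uniformly in $h$, $a$, and $d$; since $(n,k)\le n$, this is the asserted $O(k^{2/3+\epsilon}n^{1/3})$.

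I expect the main obstacle to be the linearization of the Dedekind sums carried out uniformly across all four divisibility cases, since each case in Section 3 carries its own matrices, its own modulus for $H_d$, and the multiplier (\ref{etaroot}) splits into the $2\nmid c$ and $2\nmid d$ regimes; verifying that every case collapses to the same clean Kloosterman shape is the delicate part. A secondary obstacle is the elementary prime-modulus estimate, which is the genuine arithmetic content and the reason the bound is weaker than what Weil's theorem would give. The floor-function signs and the shifts between inverses modulo $16k$, $k/d$, and $k$ are technical but routine once the progressions are set up.
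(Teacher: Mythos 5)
Your proposal follows essentially the same route as the paper, which offers no detailed proof of this lemma but simply remarks that it ``can be shown through Kloosterman sum estimation, using the techniques of Sali\'e''---precisely the program you outline: reduce the multiplier sum to a complete Kloosterman sum via the eta-multiplier formula, linearization of the Dedekind sums, and the inverse relation defining $H_d$, then apply Sali\'e's elementary estimate (exact evaluation at prime powers $p^{\lambda}$, $\lambda\ge 2$, plus the elementary exponent $2/3$ at primes) with the $\gcd$ factor $(n,k)^{1/3}\le n^{1/3}$ supplying the $n^{1/3}$. Your sketch is in fact more detailed than the paper's one-line citation, and the bookkeeping issues you flag (the periodic floor-function signs, and the inverse living modulo $16k$ or $k/d$ rather than $k$) are exactly the technical points a full write-up would need to resolve.
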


This result can be shown through Kloosterman sum estimation, using the techniques of Sali\'e \cite{Salie}.

\begin{lemma}

\begin{equation}
\left| I_{a,d}^{(0)}(h,k) \right| = O\left(\exp(3n\pi) N^{-1} \right).
\end{equation}

\end{lemma}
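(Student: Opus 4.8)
The plan is to bound $I_{a,d}^{(0)}(h,k)$ by the supremum of its integrand over the chord of integration times the length of that chord. By the earlier lemma, every point $z$ on the chord from $z_I(h,k)$ to $z_T(h,k)$ satisfies $|z| = O(N^{-1})$, whence $|z_T(h,k) - z_I(h,k)| = O(N^{-1})$ and the chord has length $O(N^{-1})$. It therefore remains to show that the integrand is $O(\exp(3n\pi))$ uniformly in $z$, $h$, $k$, and $d$.

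First I would record the two geometric facts needed. Since the chord lies in the closed disk bounded by $K_k^{(-)}$, we have $0 \le \Re(z) \le 1/k$, while the image of this disk under $w = 1/z$ is the half-plane $\Re(w) \ge k$, so that $\Re(1/z) \ge k$. From the definition of $\tau'$ one computes, in every case, $|y| = \big|e^{2\pi i \tau'}\big| = \exp\!\left(-\tfrac{\pi d}{4k}\Re(1/z)\right)$, and hence $\Re(1/z) \ge k$ forces $|y| \le e^{-\pi d/4} \le e^{-\pi/4} < 1$ uniformly.

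The key estimate is a factorization of the error series. Because $\Psi_{a,d}(y)$ is analytic on the disk $|y|<1$ (the factors $1 - \rho_{a,d}^{\pm 1} y^m$ occurring in its denominator cannot vanish there), and its Taylor coefficients are dominated in absolute value by those of a partition-type majorant, the series $\sum_{j\ge 1}\lvert\psi_{a,d}(j)\rvert\, t^j$ converges at $t = e^{-\pi/4}$. Pulling out one factor of $y$ then yields $\big\lvert\sum_{j\ge 1}\psi_{a,d}(j)y^j\big\rvert \le C_{a,d}\,\lvert y\rvert$ for a constant $C_{a,d}$. Multiplying by the modulus of the exponential factor, the two contributions in $\Re(1/z)$ combine:
\[
\left|\exp\!\left(\tfrac{\pi}{8k}\left(\tfrac{\Lambda(a,d)}{z} + z(16n+4a-5)\right)\right)\right|\cdot\lvert y\rvert = \exp\!\left(\tfrac{\pi(\Lambda(a,d)-2d)}{8k}\Re(1/z) + \tfrac{\pi(16n+4a-5)}{8k}\Re(z)\right).
\]
The hard part is verifying that $\Lambda(a,d)-2d < 0$ in all four cases; one checks $\Lambda - 2d$ equals $-15$ or $-23$ for $d=8$ (according to $b$), $-7$ for $d=4$, $-4$ for $d=2$, and $-7/4$ for $d=1$. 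Since $\Re(1/z) > 0$, the first exponent is then negative and that factor is at most $1$. This cancellation of the $1/z$ singularity against the decay of $y$ is the crux of the argument.

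It then remains only to control the surviving $\Re(z)$ term. Using $\Re(z) \le 1/k$ together with $k \ge 1$ gives $\tfrac{\pi(16n+4a-5)}{8k}\Re(z) \le \tfrac{\pi(16n+7)}{8} = 2n\pi + \tfrac{7\pi}{8} \le 3n\pi$ for $n \ge 1$. Hence the integrand is at most $C_{a,d}\exp(3n\pi)$ uniformly, and multiplying by the $O(N^{-1})$ chord length yields $\lvert I_{a,d}^{(0)}(h,k)\rvert = O\!\left(\exp(3n\pi)\,N^{-1}\right)$, as claimed.
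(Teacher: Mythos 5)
Your proposal is correct and takes essentially the same approach as the paper: deform the path to the chord of length $O(N^{-1})$ (Lemma 1), use $\Re(1/z)\ge k$ and $\Re(z)\le 1/k$ on $K_k^{(-)}$, and absorb the exponential decay of $|y|$ into the exponent so that the coefficient of $1/z$ turns negative, leaving only the $\exp(3n\pi)$ factor. The sole difference is bookkeeping: the paper estimates the series term-by-term, getting a factor $\exp(-\pi j/8)$ for each $j$ and then summing, whereas you factor out a single power of $y$ and majorize the tail by a convergent constant (your verification that $\Lambda(a,d)-2d<0$ in all four cases mirrors the paper's observation that $\Lambda(a,d)-2dj<0$ for all $j\ge 1$).
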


\begin{proof}

We may interchange the summation with the integration.  Also, remembering that $y = \exp\left(2\pi i \left(\frac{H_d}{k} + \frac{diz^{-1}}{8k}\right)\right)$,

\begin{align}
&I_{a,d}^{(0)} (h,k)\notag \\ =& \sum\limits_{j=1}^{\infty} \psi_{a,d}(j) e^{2\pi i H_dj/k}\notag\\ &\times \int\limits_{z_I(h,k)}^{z_T(h,k)} \exp\left( \frac{\pi}{8k} \left( \frac{\Lambda(a,d)}{z} + z(16n+4a-5) \right) \right) e^{-2dj\pi z^{-1}/8k} dz \\ =&  \sum\limits_{j=1}^{\infty} \psi_{a,d}(j) e^{2\pi i H_dj/k}\notag\\ &\times \int\limits_{z_I(h,k)}^{z_T(h,k)} \exp\left( \frac{\pi}{8k} \left( \frac{\Lambda(a,d)-2dj}{z} + z(16n+4a-5) \right) \right) dz.
\end{align}

Notice that no matter the permitted value of $d$, the coefficient of $1/z$ in the exponent of the integrand is now always negative.

Taking advantage of the fact that on and within $K_k^{(-)}$, $\Re(1/z) \ge k$ and $\Re(z) \le 1/k$, we now examine the magnitude of the integrand:

\begin{align}
&\left|\exp\left( \frac{\pi}{8k} \left( \frac{\Lambda(a,d)-2dj}{z} + z(16n+4a-5) \right) \right)\right| \\ =& \exp\left( \frac{\pi}{8k}(\Lambda(a,d)-2dj)\Re(1/z) + \frac{\pi}{8k}(16n+4a-5)\Re(z) \right) \\ \le& \exp\left( \frac{\pi(1-2dj)}{8} + \frac{\pi (16n+4a-5)}{8k^2} \right) \\ \le& \exp(-\pi j/8 + 3n\pi).
\end{align}

We therefore have

\begin{align}
&|I_{a,d}^{(0)}(h,k)|\notag \\ \le& \sum\limits_{j=1}^{\infty} |\psi_{a,d}(j)| |e^{2\pi i H_dj/k}|\notag\\ &\times \int\limits_{z_I(h,k)}^{z_T(h,k)} \left|\exp\left( \frac{\pi}{8k} \left( \frac{\Lambda(a,d)-2dj}{z} + z(16n+4a-5) \right) \right)\right| dz \\ \le&  \sum\limits_{j=1}^{\infty} |\psi_{a,d}(j)| \exp(-\pi j/8 + 3n\pi) \int\limits_{z_I(h,k)}^{z_T(h,k)} dz.
\end{align}

Recall that we are integrating along the circle $K_k^{(-)}$ in the $z$-plane.  We will now now deform our contour so that it is a chord connecting $z_I$ and $z_T$ along $K_k^{(-)}$.  

\begin{figure}[h]
\centering
\begin{tikzpicture}
    \begin{scope}[thick,font=\scriptsize][set layers]
    \draw [->] (-4,0) -- (4,0) node [above left]  {$\Re\{z\}$};
    \draw [->] (-3,-4) -- (-3,4) node [below left] {$\Im\{z\}$};
\draw [->] [thick] (-1.18,2.74) -- (-2.1,-2.15);
\draw [-] (0,-0.1) -- (0,0.1) node [above] {$1/2k$};
    \end{scope}
    \draw[very thin] (0,0) circle (3);
    \node [below right,black] at (3,2) {$K_k^{(-)}: \left|z - \frac{1}{2k}\right| = \frac{1}{2k}$};
\node [above,black] at (-2.1,2.5) {$z_I(h,k)$};
\node [below,black] at (-2.1,-2.6) {$z_T(h,k)$};
\fill[black] (-1.18,2.74) circle (2pt);
\fill[black] (-2.1,-2.15) circle (2pt);
\end{tikzpicture}
\caption{$K_k^{(-)}$ with the chord connecting $z_I(h,k)$ to $z_T(h,k)$.}\label{circlek2}
\end{figure}
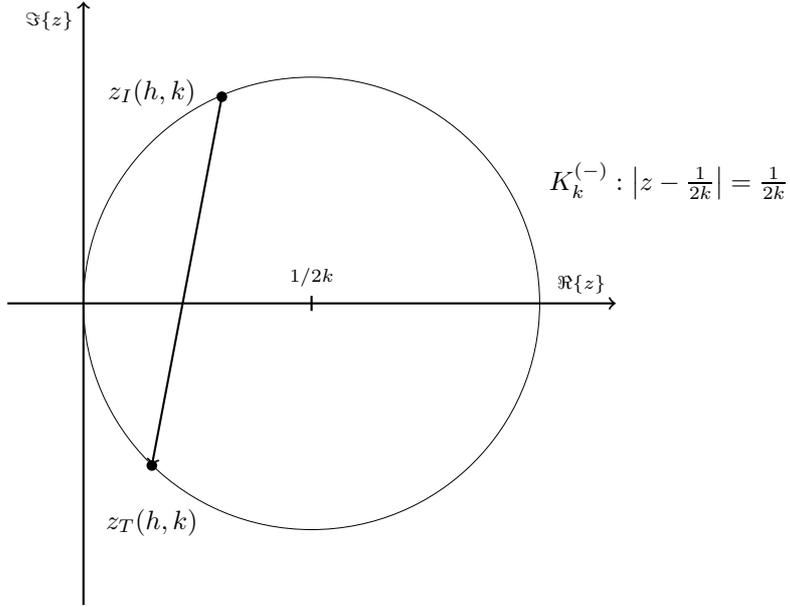

Recognizing from Lemma 1 that the length of such a chord is bounded above by a constant multiple of $N^{-1}$, we have

\begin{align}
|I_{a,d}^{(0)}(h,k)| &= O\left(\sum\limits_{j=1}^{\infty} |\psi_{a,d}(j)| \exp(-\pi j/8 + 3n\pi) N^{-1}\right) \\ &= O\left(\exp(3n\pi)N^{-1}\sum\limits_{j=1}^{\infty} |\psi_{a,d}(j)| \exp(-\pi j)\right) \\ &= O(\exp(3n\pi)N^{-1}).
\end{align}

\end{proof}

\begin{lemma}

Let $\epsilon > 0$.  Then

\begin{multline}
\left| 2^{(\alpha - 3)/2} \sum_{\substack{(k,8)=d \\ k \le N}} \frac{i}{k} T_{a,d}(k) \sum_{\substack{0 \le h < k, \\ (h,k) = 1}} \omega_{a,d}(h,k) e^{-2\pi i n h/k} I_{a,d}^{(0)}(h,k)\right|\\ = O\big( e^{3n\pi} n^{1/3} N^{-1/3 + \epsilon} \big).
\end{multline}

\end{lemma}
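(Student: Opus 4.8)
The plan is to reduce the triple sum to the product of the two preceding lemmas, weighted by $1/k$, and then to sum the resulting power of $k$ over $k\le N$. First I would dispose of the prefactors: since $\alpha=\log_2 d\in\{0,1,2,3\}$, the constant $2^{(\alpha-3)/2}$ is $O(1)$, and $T_{a,d}(k)=O(1)$ uniformly in $k$. The only nontrivial point is the case $d=1$, where $T_{a,1}(k)=|\csc(\pi a k/8)|$; but when $(k,8)=1$ the modulus $k$ is odd and $a\in\{1,3\}$ is odd, so $ak$ is odd and $ak\not\equiv 0\pmod 8$. Hence $\sin(\pi ak/8)$ is one of the finitely many nonzero values $\sin(\pi c/8)$ with $c\in\{1,3,5,7\}$, and is bounded below by $\sin(\pi/8)>0$, so $T_{a,1}(k)=O(1)$. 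Pulling these bounded factors out, it suffices to estimate
\[
\sum_{\substack{(k,8)=d\\ k\le N}}\frac{1}{k}\,\big|\Sigma_k\big|,\qquad \Sigma_k:=\sum_{\substack{0\le h<k\\ (h,k)=1}}\omega_{a,d}(h,k)\,e^{-2\pi i n h/k}\,I_{a,d}^{(0)}(h,k).
\]

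The heart of the argument is the claim that $|\Sigma_k|=O\big(k^{2/3+\epsilon}n^{1/3}\,e^{3n\pi}N^{-1}\big)$, that is, exactly the product of the bounds furnished by Lemma~2 and Lemma~3. Lemma~3 supplies the uniform smallness $|I_{a,d}^{(0)}(h,k)|=O(e^{3n\pi}N^{-1})$, while Lemma~2 supplies the cancellation $\big|\sum_h \omega_{a,d}(h,k)e^{-2\pi i n h/k}\big|=O(k^{2/3+\epsilon}n^{1/3})$ in the Kloosterman-type sum. The difficulty, and the step I expect to be the main obstacle, is that $I_{a,d}^{(0)}(h,k)$ genuinely depends on $h$, through the residue $H_d$ in the phase $e^{2\pi i H_d j/k}$ and through the chord endpoints $z_I(h,k),z_T(h,k)$, so it cannot simply be factored out of $\Sigma_k$ without destroying the cancellation. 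To retain the cancellation I would apply summation by parts over the $h$ coprime to $k$ taken in increasing order: writing $A(m)$ for the partial Kloosterman sums $\sum_{i\le m}\omega_{a,d}(h_i,k)e^{-2\pi i n h_i/k}$, Abel summation expresses $\Sigma_k$ through the $A(m)$ weighted by the successive differences of $I_{a,d}^{(0)}(h_i,k)$. This needs two inputs beyond the stated lemmas: that the partial sums $A(m)$ obey the same $O(k^{2/3+\epsilon}n^{1/3})$ bound (which the completion and Sali\'e techniques behind Lemma~2 deliver at the cost of an absorbable $k^{\epsilon}$), and that the total variation $\sum_i|I_{a,d}^{(0)}(h_{i+1},k)-I_{a,d}^{(0)}(h_i,k)|$ is again $O(e^{3n\pi}N^{-1})$, which one obtains by estimating the differences of the integrand and endpoints under the chord-length control of Lemma~1, exactly as in the proof of Lemma~3. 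Combining these yields the claimed bound on $|\Sigma_k|$.

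With the estimate for $|\Sigma_k|$ in hand, the remainder is routine. Substituting it gives
\[
\sum_{\substack{(k,8)=d\\ k\le N}}\frac{1}{k}\,|\Sigma_k|=O\!\left(e^{3n\pi}n^{1/3}N^{-1}\sum_{k\le N}k^{-1/3+\epsilon}\right),
\]
and since $\sum_{k\le N}k^{-1/3+\epsilon}=O(N^{2/3+\epsilon})$ by comparison with $\int_1^N x^{-1/3+\epsilon}\,dx$, the bracketed quantity is $O\big(e^{3n\pi}n^{1/3}N^{-1+2/3+\epsilon}\big)=O\big(e^{3n\pi}n^{1/3}N^{-1/3+\epsilon}\big)$ after relabelling $\epsilon$. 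Restoring the $O(1)$ factors $2^{(\alpha-3)/2}$ and $T_{a,d}(k)$ then gives the assertion of the lemma.
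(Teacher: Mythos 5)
Your proposal matches the paper's proof at both ends: the paper likewise begins by discarding the bounded factors $2^{(\alpha-3)/2}$ and $T_{a,d}(k)$ (your observation that $(k,8)=1$ forces $ak$ odd, so $|\sin(\pi ak/8)|\ge\sin(\pi/8)$, is exactly the reason this is legitimate), and it closes with the same summation $\sum_{k\le N}k^{2/3+\epsilon}/k=O(N^{2/3+\epsilon})$ followed by relabelling $\epsilon$. Where you diverge is the middle step. The paper simply writes
\begin{equation*}
\Bigl|\sum_{\substack{0\le h<k,\\ (h,k)=1}}\omega_{a,d}(h,k)\,e^{-2\pi i n h/k}\,I_{a,d}^{(0)}(h,k)\Bigr|\ \le\ \frac{C\,e^{3n\pi}}{N}\,\Bigl|\sum_{\substack{0\le h<k,\\ (h,k)=1}}\omega_{a,d}(h,k)\,e^{-2\pi i n h/k}\Bigr|,
\end{equation*}
that is, it factors the uniform bound on $I_{a,d}^{(0)}$ out of the oscillating sum while retaining the Kloosterman cancellation --- precisely the move you (rightly) flag as not a literal inequality, since $I_{a,d}^{(0)}$ depends on $h$. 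So you have correctly located a subtlety that the paper's own write-up elides.

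However, your repair does not close the gap. The load-bearing step in your Abel summation is the total-variation bound $\sum_i\bigl|I_{a,d}^{(0)}(h_{i+1},k)-I_{a,d}^{(0)}(h_i,k)\bigr|=O(e^{3n\pi}N^{-1})$, and that estimate fails: the dominant $h$-dependence of $I_{a,d}^{(0)}$ is not through the endpoints $z_I,z_T$ (which do vary tamely along the Farey dissection) but through the arithmetic phase $e^{2\pi i H_d j/k}$, where $H_d$ is a modular inverse of $h$. As $h$ steps through consecutive residues coprime to $k$, $H_d$ jumps essentially arbitrarily modulo $k$, so consecutive differences of $I_{a,d}^{(0)}$ are generically of the same order as $I_{a,d}^{(0)}$ itself; the total variation is then only $O(\phi(k)\,e^{3n\pi}N^{-1})$, and Abel summation returns no more than the trivial bound $O(k\,e^{3n\pi}N^{-1})$ for $\Sigma_k$, whose weighted sum over $k\le N$ is $O(e^{3n\pi})$ with no decay in $N$. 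The repair that actually works --- and that the paper's appeal to Sali\'e's techniques implicitly licenses --- runs in the opposite direction: expand $I_{a,d}^{(0)}$ in its $j$-series as in the proof of Lemma 3, interchange the $j$- and $h$-sums so that the phase $e^{2\pi i H_d j/k}$ is absorbed into the character sum, producing Kloosterman-type sums $\sum_h \omega_{a,d}(h,k)\,e^{2\pi i(jH_d-nh)/k}$; handle the residual endpoint dependence by splitting the path of integration at points independent of $h$, which restricts $h$ to subranges determined by its Farey neighbours and turns these into \emph{incomplete} Kloosterman sums, for which Sali\'e's method still gives bounds of strength $O(k^{2/3+\epsilon}n^{1/3})$ uniformly in $j$ and in the subrange; the factor $e^{-\pi j/8}$ coming from the now-negative coefficient of $1/z$ then makes the $j$-sum converge. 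This yields your claimed $|\Sigma_k|=O(k^{2/3+\epsilon}n^{1/3}e^{3n\pi}N^{-1})$, after which your concluding summation over $k$ is correct. In short: cancellation against a phase built from $h^{-1}\bmod k$ cannot be extracted by partial summation in $h$; it must be folded into the Kloosterman estimate itself, i.e., Lemma 2 must be invoked in its stronger $j$-uniform, incomplete form.
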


\begin{proof}

We note that since $2^{(\alpha - 3)/2}$ and $T_{a,d}(k)$ are bounded, we may disregard both in our estimation.  We now take the previous result into account:

\begin{align}
&\left| \sum_{\substack{(k,8)=d \\ k \le N}} \frac{i}{k} \sum_{\substack{0 \le h < k, \\ (h,k) = 1}} \omega_{a,d}(h,k) e^{-2\pi i n h/k} I_{a,d}^{(0)}(h,k)\right|\notag \\ &\le \sum_{\substack{(k,8)=d \\ k \le N}} \frac{1}{kN} e^{3n\pi} \left| \sum_{\substack{0 \le h < k, \\ (h,k) = 1}} \omega_{a,d}(h,k) e^{-2\pi i n h/k} \right|. 
\end{align}

With Lemma 3, we know that

\begin{equation}
\left| \sum_{\substack{0 \le h < k, \\ (h,k) = 1}} \omega_{a,d}(h,k) e^{-2\pi i n h/k} \right| = O\left(k^{2/3 + \epsilon}n^{1/3}\right).
\end{equation}

This gives us

\begin{align}
&\left| \sum_{\substack{(k,8)=d \\ k \le N}} \frac{i}{k} \sum_{\substack{0 \le h < k, \\ (h,k) = 1}} \omega_{a,d}(h,k) e^{-2\pi i n h/k} I_{a,d}^{(0)}(h,k)\right|\notag\\ =& O \left( \left|\sum_{\substack{(k,8)=d \\ k \le N}} \frac{1}{kN} e^{3n\pi} k^{2/3 + \epsilon}n^{1/3} \right| \right) = O \left( \left| e^{3n\pi}n^{1/3}N^{-1} \sum_{k = 1}^{N} \frac{ k^{2/3 + \epsilon}}{k} \right| \right).
\end{align}

Recognizing that

\begin{equation}
\sum_{k = 1}^{N} \frac{ k^{2/3 + \epsilon}}{k} = \sum_{k = 1}^{N} \frac{ k^{2/3 + 2\epsilon}}{k^{1+\epsilon}} \le \sum_{k = 1}^{N} \frac{ N^{2/3 + 2\epsilon}}{k^{1+\epsilon}} = N^{2/3 + 2\epsilon} \sum_{k = 1}^{N} \frac{1}{k^{1+\epsilon}},
\end{equation} that $\sum_{k = 1}^{N} \frac{1}{k^{1+\epsilon}}$ is bounded above as $N$ gets large, and finally noting that we may replace $2\epsilon$ with $\epsilon$, we now have

\begin{equation}
O\left( \left| e^{3n\pi}n^{1/3}N^{-1} \sum_{k = 1}^{N} \frac{ k^{2/3 + \epsilon}}{k} \right| \right) = O( e^{3n\pi}n^{1/3}N^{-1/3 + \epsilon}),
\end{equation} and the proof is completed.

\end{proof}

We now have 

\begin{multline}
g_a^{(d)}(n) = 2^{(\alpha - 3)/2} \sum_{\substack{(k,8)=d \\ k \le N}} \frac{i}{k} T_{a,d}(k) \sum_{\substack{0 \le h < k, \\ (h,k) = 1}} \omega_{a,d}(h,k) e^{-2\pi i n h/k} I_{a,d}^{(1)}(h,k)\\ + O\left( e^{3n\pi} n^{1/3} N^{-1/3 + \epsilon} \right).
\end{multline}

Our object now will be to put $I_{a,d}^{(1)}(h,k)$ into a form approachable from the theory of Bessel functions.

We now return to the original Rademacher contour of $I_{a,d}^{(1)}(h,k)$, along a portion of $K_k^{(-)}$.  The brilliance of the contour becomes clear once it is realized that $\Re(1/z) = k$, i.e. is a constant, provided we remain along $K_k^{(-)}$ (and avoid $z = 0$, of course).  We wish to make use of the whole of $K_k^{(-)}$, so we will make adjustments to the contour as follows:

\begin{multline}
I_{a,d}^{(1)}(h,k) = \left( \oint_{K_k^{(-)}} - \int\limits_0^{z_I(h,k)} - \int\limits_{z_T(h,k)}^0 \right)\\ \exp\left( \frac{\pi}{8k} \left( \frac{\Lambda(a,d)}{z} + z(16n+4a-5) \right) \right)dz\label{intcircle}.
\end{multline}

Notice that $\int\limits_0^{z_I(h,k)}$ and $\int\limits_{z_T(h,k)}^0$ are improper: the integrand is not defined at $z = 0$.  We interpret these integrals as limits in which a variable approaches $0$. We will now show that $\int\limits_0^{z_I(h,k)}$ and $\int\limits_{z_T(h,k)}^0$ will not contribute anything of importance:

\begin{lemma}

\begin{multline}
\left| \int\limits_{z_T(h,k)}^0 \exp\left( \frac{\pi}{8k} \left( \frac{1}{z} + z(16n+4a-5) \right) \right)dz \right|,\\ \left| \int\limits_0^{z_I(h,k)}\exp\left( \frac{\pi}{8k} \left( \frac{\Lambda(a,d)}{z} + z(16n+4a-5) \right) \right)dz \right| \\ = O\big(\exp(3n\pi) N^{-1} \big).
\end{multline}

\end{lemma}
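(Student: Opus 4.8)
The plan is to bound each of the two improper integrals by an \emph{ML}-type estimate---the maximum modulus of the integrand times the length of its path---exactly as in the proof of the preceding lemma, taking full advantage of the geometry of $K_k^{(-)}$. First I would recall the defining feature of this circle, already noted above: for every $z$ lying on $K_k^{(-)}$ we have $\Re(1/z) = k$, a constant. Indeed, the inversion $w = 1/z$ carries the circle $\left| z - \tfrac{1}{2k} \right| = \tfrac{1}{2k}$, which passes through the origin, onto the vertical line $\Re(w) = k$. Together with the elementary bound $0 \le \Re(z) \le 1/k$ on $K_k^{(-)}$, these two facts hold at every point of the two arcs of integration, since both arcs lie on $K_k^{(-)}$.

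With this in hand I would estimate the modulus of the integrand pointwise. Since $\Re(1/z) = k$ and $0 \le \Re(z) \le 1/k$, while $16n+4a-5 > 0$, the real part of the exponent satisfies
\[
\frac{\pi}{8k}\left( \Lambda(a,d)\,\Re(1/z) + (16n+4a-5)\,\Re(z) \right) \le \frac{\pi \Lambda(a,d)}{8} + \frac{\pi(16n+4a-5)}{8k^2}.
\]
Because $\Lambda(a,d) \le 1$ for every admissible pair $(a,d)$, and $k \ge 1$, $a \in \{1,3\}$, the right-hand side is at most $3n\pi$ for all $n \ge 1$; hence
\[
\left| \exp\left( \frac{\pi}{8k}\left( \frac{\Lambda(a,d)}{z} + z(16n+4a-5) \right) \right) \right| \le \exp(3n\pi)
\]
uniformly in $h$ and $k$. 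The crucial point, which makes these improper integrals harmless, is that this bound persists as $z \to 0$ along the circle: although $1/z$ diverges there, its real part stays pinned at $k$, so the integrand remains bounded and the limiting integrals converge.

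It then remains to bound the two path lengths by $O(N^{-1})$. By Lemma 1 the endpoints obey $|z_I(h,k)|, |z_T(h,k)| = O(N^{-1})$; they are endpoints of the chord to which that lemma applies, and equivalently $|z_I| = (k^2+k_p^2)^{-1/2}$, so that the Farey relation $k + k_p > N$ forces $k^2 + k_p^2 > N^2/2$. Each arc runs from $0$ to one of these points along $K_k^{(-)}$; since the main arc $\gamma(h,k)$ sits near $z = 1/k$, the image of the top of the Ford circle, the two complementary arcs near $z = 0$ are minor arcs, and the length of a minor arc is at most $\tfrac{\pi}{2}$ times its chord. Thus each arc has length at most $\tfrac{\pi}{2}|z_I|$ or $\tfrac{\pi}{2}|z_T|$, that is $O(N^{-1})$. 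Multiplying the uniform integrand bound $\exp(3n\pi)$ by the arc length $O(N^{-1})$ gives the claimed $O(\exp(3n\pi)\,N^{-1})$ for each integral.

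The step I expect to require the most care is the arc-length bound: one must confirm that the arcs from $0$ to $z_I$ and from $z_T$ to $0$ are genuinely minor, so that the chord estimate of Lemma 1 transfers from the chord to the arc, which rests on identifying where $\gamma(h,k)$ lies on $K_k^{(-)}$. The improper behaviour at $z = 0$ is only a superficial difficulty, dissolved by the observation that $\Re(1/z)$, and hence the integrand's modulus, stays bounded at the singularity.
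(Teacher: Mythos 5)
Your proposal is correct and follows essentially the same route as the paper's own proof: both remain on $K_k^{(-)}$, bound the integrand by $\exp(3n\pi)$ using $\Re(1/z)=k$ and $\Re(z)\le 1/k$ (with $\Lambda(a,d)\le 1$), and then bound each arc length by $\tfrac{\pi}{2}|z_I(h,k)|$ or $\tfrac{\pi}{2}|z_T(h,k)|$, which is $O(N^{-1})$ by Lemma 1. The only difference is cosmetic: you make explicit the minor-arc justification and the observation that $\Re(1/z)=k$ keeps the integrand bounded as $z\to 0$, points the paper leaves implicit.
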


\begin{proof}

We will keep on $K_k^{(-)}$ for these estimations.  Since the estimation is almost identical in either case, we will work with the integral $\int\limits_{z_T(h,k)}^0$.  We begin by estimating the integrand of the integral:

\begin{align}
&\left|\exp\left( \frac{\pi}{8k} \left( \frac{\Lambda(a,d)}{z} + z(16n+4a-5) \right) \right)\right|\notag \\ &= \exp\left( \frac{\pi}{8k} \left( \Re(1/z) + \Re(z)(16n+4a-5) \right) \right) \\ &\le \exp\left( \frac{\pi}{8k} \left( k + \frac{16n+4a-5}{k} \right) \right) \\ &\le \exp\bigg( \frac{\pi}{8} + \frac{\pi (16n + 4a - 5)}{8k^2} \bigg) \\ &\le \exp(3n\pi ).
\end{align}

We now estimate the path of integration:

The chord connecting 0 with $z_T(h,k)$ can be no longer than the diameter of $K^{(-)}$, so the length along the arc from 0 to $z_T(h,k)$ can be no longer than $|z_T(h,k)|\frac{\pi}{2}$.  Since $|z_T(h,k)| =O\left(N^{-1}\right)$, we have a path length that is $O(N^{-1})$.  This gives us

\begin{align}
&\left| \int\limits_{z_T(h,k)}^0\exp\left( \frac{\pi}{8k} \left( \frac{\Lambda(a,d)}{z} + z(16n+4a-5) \right) \right)dz \right|\notag \\ &\le \int\limits_{z_T(h,k)}^0 \left| \exp\left( \frac{\pi}{8k} \left( \frac{\Lambda(a,d)}{z} + z(16n+4a-5) \right) \right) \right| dz \\ &\le \exp(3n\pi ) \int\limits_{z_T(h,k)}^0 dz \\ &= O\left(\exp(3n\pi )N^{-1}\right).
\end{align}

The case for $\int\limits_0^{z_I(h,k)}$ is almost identical.

\end{proof}

As a consequence of the previous Lemmas 2, 3, 4, and 5, we have

\begin{theorem}

\begin{multline}
g_a^{(d)} (n) = 2^{(\alpha - 3)/2} \sum_{\substack{(k,8)=d, \\ k \le N}} \frac{i}{k} T_{a,d}(k) \sum_{\substack{0 \le h < k, \\ (h,k) = 1}} \omega_{a,d}(h,k) e^{-2\pi i n h/k}\\ \times \oint_{K_k^{(-)}} \exp\left( \frac{\pi}{8k} \left( \frac{\Lambda(a,d)}{z} + z(16n+4a-5) \right) \right)dz\\ + O\left( e^{3n\pi} n^{1/3} N^{-1/3 + \epsilon} \right) \label{gcd8}.
\end{multline}

\end{theorem}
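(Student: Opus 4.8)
The plan is to begin from the expression for $g_a^{(d)}(n)$ already in hand, in which the integrals $I_{a,d}^{(1)}(h,k)$ appear weighted by $\omega_{a,d}(h,k)e^{-2\pi i nh/k}$, summed over $(k,8)=d$, $k\le N$ and $0\le h<k$, $(h,k)=1$, together with the accumulated error $O(e^{3n\pi}n^{1/3}N^{-1/3+\epsilon})$ supplied by Lemma 4. Into this I would substitute the contour identity (\ref{intcircle}), which expresses each $I_{a,d}^{(1)}(h,k)$ as the closed loop integral $\oint_{K_k^{(-)}}$ minus the two radial tails $\int_0^{z_I(h,k)}$ and $\int_{z_T(h,k)}^{0}$ of the same integrand. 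Distributing the double sum across these three pieces, the closed-loop contributions reassemble verbatim into the main term claimed in the theorem, and the entire remaining task is to show that the two tail contributions are absorbed into the error term.

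To that end I would bound, in modulus, the quantity
\begin{equation*}
2^{(\alpha-3)/2}\sum_{\substack{(k,8)=d\\k\le N}}\frac{1}{k}\,T_{a,d}(k)\sum_{\substack{0\le h<k\\(h,k)=1}}\omega_{a,d}(h,k)e^{-2\pi i nh/k}\int_{z_T(h,k)}^{0}\exp\!\left(\frac{\pi}{8k}\left(\frac{\Lambda(a,d)}{z}+z(16n+4a-5)\right)\right)dz,
\end{equation*}
and its twin over $\int_0^{z_I(h,k)}$, by exactly the chain of inequalities used in the proof of Lemma 4. Here $2^{(\alpha-3)/2}$ and $T_{a,d}(k)$ are bounded and may be discarded; the immediately preceding lemma (Lemma 5) bounds each tail integral by $O(e^{3n\pi}N^{-1})$, playing the role that the bound on $I_{a,d}^{(0)}(h,k)$ played before; and the Kloosterman-type estimate of Lemma 2 bounds $\bigl|\sum_h \omega_{a,d}(h,k)e^{-2\pi i nh/k}\bigr|$ by $O(k^{2/3+\epsilon}n^{1/3})$. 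Combining these, each tail contribution is
\begin{equation*}
O\!\left(e^{3n\pi}n^{1/3}N^{-1}\sum_{k\le N}\frac{k^{2/3+\epsilon}}{k}\right)=O\!\left(e^{3n\pi}n^{1/3}N^{-1}\cdot N^{2/3+\epsilon}\right)=O\!\left(e^{3n\pi}n^{1/3}N^{-1/3+\epsilon}\right),
\end{equation*}
using the same reindexing of $\epsilon$ and the convergence of $\sum_{k\le N}k^{-1-\epsilon}$ invoked in Lemma 4. Thus both detached tails are of the same order as the error already present, and all three coalesce into the single error term displayed in the statement, leaving the closed-loop sum as the main term.

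I do not expect a genuine analytic obstacle at this stage: the delicate work—the transformation formula for $F_a(q)$, the Kloosterman-type estimate of Lemma 2, the decay estimate for $I_{a,d}^{(0)}(h,k)$ of Lemma 3 aggregated in Lemma 4, and the tail decay of Lemma 5—has already been carried out, and the present statement is their \emph{bookkeeping synthesis}. The only point requiring care is to confirm that the two tail integrals, now detached from the arc $\gamma(h,k)$ and taken instead along $K_k^{(-)}$, obey precisely the same $k$-aggregated bound as the $I_{a,d}^{(0)}$ terms did. This follows at once, since Lemma 5 furnishes them with an identical $O(e^{3n\pi}N^{-1})$ estimate, so the summation argument of Lemma 4 transfers without modification.
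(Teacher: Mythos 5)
Your proposal is correct and is essentially the paper's own argument: the paper presents this theorem as an immediate consequence of Lemmas 2--5, and your substitution of the contour decomposition (\ref{intcircle}) into the sum over $I_{a,d}^{(1)}(h,k)$, with the closed loops $\oint_{K_k^{(-)}}$ reassembling into the main term and the two detached tails bounded by Lemma 5 and then aggregated over $h$ and $k$ via the Kloosterman estimate exactly as in the proof of Lemma 4, is precisely the intended bookkeeping. The one point deserving care---that the tail integrals depend on $h$ through $z_I(h,k)$ and $z_T(h,k)$, so factoring their uniform $O(e^{3n\pi}N^{-1})$ bound out of the $h$-sum while still invoking the cancellation in $\sum_h \omega_{a,d}(h,k)e^{-2\pi i nh/k}$ requires justification---is inherited verbatim from, and shared with, the paper's own treatment in Lemma 4, so your proof matches the paper's at the same level of rigor.
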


\subsection{Estimating $g_a^{(8)}(n)$}

In the case of $d=8$ we can discard a large portion of what remains.  Notice that

\begin{equation}
I_{a,8}^{(1)}(h,k) = \int\limits_{z_I(h,k)}^{z_T(h,k)} \exp\left( \frac{\pi}{8k} \left( \frac{(b-4)^2 - 8}{z} + z(16n+4a-5) \right) \right) dz,
\end{equation} with $b \equiv ah \pmod 8$.  If $b = 1,7$, then the coefficient of $1/z$ in the exponent is 1.  However, if $b=3,5$, then the coefficient is $-7$, and by almost identical reasoning of Lemmas 2, 3, 4, applied to $I_{a,8}^{(1)}(h,k)$, we have

\begin{equation}
\left| I_{a,8}^{(1)}(h,k) \right| = O\left(\exp(3n\pi) N^{-1} \right).
\end{equation}  Now $\alpha = 3$ and $T_{a,d}(k)=1$.  Since $b=1,7$ implies $h \equiv \pm a \pmod 8$, we have

\begin{multline}
g_a^{(8)}(n) =  \sum_{\substack{(k,8)=8 \\ k \le N}} \frac{i}{k} \sum_{\substack{0 \le h < k, \\ (h,k) = 1 \\ h \equiv \pm a \pmod 8}} \omega_{a,8}(h,k) e^{-2\pi i n h/k}\\ \times \oint_{K_k^{(-)}} \exp\left( \frac{\pi}{8k} \left( \frac{1}{z} + z(16n+4a-5) \right) \right)dz + O\left( e^{3n\pi} n^{1/3} N^{-1/3 + \epsilon} \right)\label{gcd8real}.
\end{multline}

\subsection{Estimating $g_a^{(4)}(n)$}

Lemmas 2, 3, 4, and 5 are sufficient to complete the estimation of $g_a^{(4)}(n)$.  With $\alpha = 2$ and $T_{a,d}(k)=1$, we have

\begin{theorem}

\begin{multline}
g_a^{(4)} (n) = \frac{1}{\sqrt{2}} \sum_{\substack{(k,8)=4, \\ k \le N}} \frac{i}{k} \sum_{\substack{0 \le h < k, \\ (h,k) = 1}} \omega_{a,4}(h,k) e^{-2\pi i n h/k}\\ \times\oint_{K_k^{(-)}} \exp\left( \frac{\pi}{8k} \left( \frac{1}{z} + z(16n+4a-5) \right) \right)dz\\ + O\left( e^{3n\pi} n^{1/3} N^{-1/3 + \epsilon} \right)\label{gcd4}.
\end{multline}

\end{theorem}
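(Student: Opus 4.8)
The plan is to read off the result as the $d=4$ specialization of the general estimate~(\ref{gcd8}), which was itself established as a consequence of Lemmas 2, 3, 4, and 5. First I would record the three case-specific data. Since $\alpha=\log_2 d=\log_2 4=2$, the global prefactor is $2^{(\alpha-3)/2}=2^{-1/2}=1/\sqrt{2}$. From the definition of $T_{a,d}(k)$, the cosecant weight occurs only for $d=1$, so here $T_{a,4}(k)=1$. From the table defining $\Lambda(a,d)$ we have $\Lambda(a,4)=1$. Substituting these three values into~(\ref{gcd8}) produces the claimed display verbatim, with the inner sum running over all $0\le h<k$, $(h,k)=1$.

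The essential point---and the reason this case needs no work beyond the general estimate, in contrast to $d=8$---is that the coefficient of $1/z$ in the exponent is the fixed positive constant $\Lambda(a,4)=1$, independent of $h$. In the $d=8$ analysis one had $\Lambda(a,8)=(b-4)^2-8$, which equals $+1$ for $b=1,7$ but $-7$ for $b=3,5$; the negative residues forced an additional bounding argument, mirroring Lemmas 2, 3, and 4 applied to $I_{a,8}^{(1)}(h,k)$, to show those contributions are $O\big(e^{3n\pi}N^{-1}\big)$ and may be discarded. Here no such split arises: every Farey fraction $h/k$ with $(k,8)=4$ contributes a genuine main-term integral with the same positive coefficient, so the full inner sum is retained with no congruence restriction on $h$, and there is nothing to throw away.

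The remaining bookkeeping is to confirm that the error term transfers unchanged. This is immediate: the bound $O\big(e^{3n\pi}n^{1/3}N^{-1/3+\epsilon}\big)$ of Lemma 4 was derived uniformly in $d$ from the Kloosterman-type estimate of Lemma 3 together with the bound on $I_{a,d}^{(0)}(h,k)$ of Lemma 2, while the two endpoint integrals from $0$ to $z_I(h,k)$ and from $z_T(h,k)$ to $0$ contribute only $O\big(e^{3n\pi}N^{-1}\big)$ per arc by Lemma 5; summed over the $O(k)$ admissible $h$ and over $k\le N$ these are absorbed into the stated error. I expect no genuine obstacle here, since the substantive analytic content already resides in~(\ref{gcd8}); the $d=4$ statement is its clean specialization, and the only verification is that the constants $2^{(\alpha-3)/2}=1/\sqrt{2}$ and $T_{a,4}(k)=1$ are exactly the bounded quantities whose boundedness was invoked in the proof of Lemma 4.
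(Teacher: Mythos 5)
Your proposal is correct and matches the paper's own argument exactly: the paper likewise obtains the $d=4$ theorem by specializing the general estimate~(\ref{gcd8}) (itself a consequence of Lemmas 2, 3, 4, and 5) with $\alpha=2$, $T_{a,4}(k)=1$, and $\Lambda(a,4)=1$. Your added remark contrasting this with the $b=3,5$ discarding needed when $d=8$ is accurate and consistent with the paper's Section 4.1.
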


\subsection{Estimating $g_a^{(2)}(n)$}

In the case of $d=2$, the coefficient of $1/z$ in the exponential of the integrand is never positive.  Therefore, we may immediately apply the reasoning of Lemmas 2, 3, 4 to both $I_{a,2}^{(0)}(h,k)$ and $I_{a,2}^{(1)}(h,k)$:

\begin{equation}
|I_{a,2}^{(0)}(h,k)| = |I_{a,2}^{(1)}(h,k)| = O\left(\exp(3n\pi)N^{-1}\right).
\end{equation}

Therefore,

\begin{theorem}

\begin{equation}
g_a^{(2)} (n) = O\left( e^{3n\pi} n^{1/3} N^{-1/3 + \epsilon} \right)\label{gcd2}.
\end{equation}

\end{theorem}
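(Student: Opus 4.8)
The plan is to show that, unlike the cases $d=8,4$, the $d=2$ contribution contains no surviving main term, so that the entire quantity $g_a^{(2)}(n)$ collapses into the error term already accounted for elsewhere. The decisive observation is that $\Lambda(a,2)=0$: the coefficient of $1/z$ in the exponent of the integrand of $I_{a,2}^{(1)}(h,k)$ is exactly $0$, while in $I_{a,2}^{(0)}(h,k)$ it is $\Lambda(a,2)-2dj=-4j<0$. In neither case is this coefficient positive, which is precisely the hypothesis under which the single-integral estimate behind the bound on $I_{a,d}^{(0)}(h,k)$ applies.

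First I would bound each integral separately. Staying on the arc of $K_k^{(-)}$, where $\Re(1/z)=k$ and $0\le\Re(z)\le 1/k$, the magnitude of the integrand of $I_{a,2}^{(1)}(h,k)$ is
\begin{equation*}
\exp\left(\tfrac{\pi}{8k}\bigl(0\cdot k+(16n+4a-5)\Re(z)\bigr)\right)\le \exp\left(\tfrac{\pi(16n+4a-5)}{8k^2}\right)\le e^{3n\pi},
\end{equation*}
and the same bound holds for $I_{a,2}^{(0)}(h,k)$ since its $1/z$-coefficient is negative. Combining this with the chord-length estimate of Lemma 1 (the path from $z_I$ to $z_T$ has length $O(N^{-1})$) yields
\begin{equation*}
\bigl|I_{a,2}^{(0)}(h,k)\bigr|=\bigl|I_{a,2}^{(1)}(h,k)\bigr|=O\bigl(e^{3n\pi}N^{-1}\bigr),
\end{equation*}
exactly as asserted in the subsection.

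Next I would feed these two bounds into the summation machinery already developed for the error term. With $\alpha=\log_2 2=1$ and $T_{a,2}(k)=1$ both bounded, the coefficients out front are harmless. Applying the Kloosterman-type estimate $\bigl|\sum_{h}\omega_{a,2}(h,k)e^{-2\pi i n h/k}\bigr|=O(k^{2/3+\epsilon}n^{1/3})$ to the inner sum and then summing over $k$ with $(k,8)=2$, $k\le N$, reproduces verbatim the computation in the proof of the combined-sum lemma, namely
\begin{equation*}
\sum_{k\le N}\frac{k^{2/3+\epsilon}}{k}\le N^{2/3+\epsilon},
\end{equation*}
so that the full double sum is $O\bigl(e^{3n\pi}n^{1/3}N^{-1}\cdot N^{2/3+\epsilon}\bigr)=O\bigl(e^{3n\pi}n^{1/3}N^{-1/3+\epsilon}\bigr)$, which is the claimed estimate.

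The step requiring the most care is the first one, verifying that $I_{a,2}^{(1)}(h,k)$---not merely $I_{a,2}^{(0)}(h,k)$---is genuinely an error term. In the cases $d=8,4$ the integral $I^{(1)}$ carried $\Lambda>0$ and therefore survived as a Bessel-function main term after deformation to the full circle $K_k^{(-)}$; here the vanishing of $\Lambda(a,2)$ removes the positive $1/z$ contribution that would otherwise force such a main term, and this is exactly what lets the identical magnitude-plus-length argument dispose of $I^{(1)}$ as well. Once that is in hand, no appeal to the full circle (and hence no Bessel integral) is needed, and the remaining step is the routine summation above.
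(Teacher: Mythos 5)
Your proposal is correct and takes essentially the same approach as the paper: the paper likewise observes that $\Lambda(a,2)=0$ makes the coefficient of $1/z$ non-positive in both $I_{a,2}^{(1)}(h,k)$ and $I_{a,2}^{(0)}(h,k)$, concludes $\bigl|I_{a,2}^{(0)}(h,k)\bigr|=\bigl|I_{a,2}^{(1)}(h,k)\bigr|=O\bigl(e^{3n\pi}N^{-1}\bigr)$ by the reasoning of its Lemmas 2, 3, and 4, and then sums via the Kloosterman estimate so that no Bessel main term survives. You have simply written out explicitly the chord-deformation and summation steps that the paper compresses into a two-line appeal to those lemmas.
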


\subsection{Estimating $g_a^{(1)}(n)$}

Lemmas 2, 3, 4, and 5 are sufficient to complete the estimation of $g_a^{(1)}(n)$.  Noting that $\alpha = 0$ and $T_{a,d}(k)=|\csc(\pi a k/8)|$, we have

\begin{theorem}
\begin{multline}
g_a^{(1)} (n) = \frac{1}{2\sqrt{2}}  \sum_{\substack{(k,8)=1 \\ k \le N}} \frac{i}{k} \left|\csc\left(\frac{\pi a k}{8}\right)\right| \sum_{\substack{0 \le h < k, \\ (h,k) = 1}} \omega_{a,1}(h,k) e^{-2\pi i n h/k}\\ \times\oint_{K_k^{(-)}} \exp\left( \frac{\pi}{8k} \left( \frac{1}{4z} + z(16n+4a-5) \right) \right)dz\\ + O\left( e^{3n\pi} n^{1/3} N^{-1/3 + \epsilon} \right)\label{gcd1}.
\end{multline}
\end{theorem}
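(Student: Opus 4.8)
The plan is to obtain this statement as the $d = 1$ specialization of the general reduction established in Theorem \ref{gcd8}, so that almost all of the work is already in hand and the task reduces to substituting the correct constants and confirming that each supporting estimate survives in this case. First I would record that $\alpha = \log_2(1) = 0$, whence the leading factor is $2^{(\alpha - 3)/2} = 2^{-3/2} = \tfrac{1}{2\sqrt 2}$; that $\Lambda(a,1) = 1/4$, producing the integrand $\exp\!\big(\tfrac{\pi}{8k}(\tfrac{1}{4z} + z(16n + 4a - 5))\big)$; and that $T_{a,1}(k) = |\csc(\pi a k/8)|$. With these values Theorem \ref{gcd8} becomes verbatim the asserted formula, provided its supporting Lemmas apply when $d = 1$.

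I would then check those estimates in turn. Since $\Lambda(a,1) = 1/4 > 0$, the coefficient of $1/z$ in $I_{a,1}^{(1)}(h,k)$ is positive, so this integral is retained, in contrast to the case $d = 2$ where it was discarded. For the complementary part the relevant coefficient is $\Lambda(a,1) - 2dj = \tfrac14 - 2j < 0$ for every $j \ge 1$, so the generic argument of Lemma 3 applies unchanged and $|I_{a,1}^{(0)}(h,k)| = O(e^{3n\pi}N^{-1})$. Combining this with the Kloosterman estimate, valid for $d = 1$ by its very statement, through Lemma 4 contributes the error $O(e^{3n\pi} n^{1/3} N^{-1/3 + \epsilon})$, while Lemma 5 removes the two improper integrals from $0$ to $z_I$ and from $z_T$ to $0$: its proof bounds the integrand on $K_k^{(-)}$ via $\Re(1/z) = k$, and since $\Lambda(a,1) = 1/4 \le 1$ the same bound holds with room to spare. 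The arcs over $\gamma(h,k)$ are thereby closed into full contour integrals over $K_k^{(-)}$, exactly as in the proof of Theorem \ref{gcd4}.

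The one genuinely new point, specific to the case $d = 1$ and the step I expect to require the most care, is the boundedness of $T_{a,1}(k) = |\csc(\pi a k/8)|$, which Lemma 4 silently requires when it absorbs this factor into its implied constant. Here I would argue that $(k,8) = 1$ forces $k$ to be odd, so with $a \in \{1,3\}$ the product $ak$ is odd and hence $ak \equiv \pm 1, \pm 3 \pmod 8$, never $0$; consequently $|\sin(\pi a k/8)| \ge \sin(\pi/8) > 0$ uniformly in $k$, and $|\csc(\pi a k/8)| \le \csc(\pi/8)$ is a constant independent of $k$ and $n$. With this uniform bound in hand the invocation of Lemma 4 is justified, every appeal to Lemmas 2 through 5 goes through, and substituting $\alpha = 0$, $\Lambda(a,1) = 1/4$, and $T_{a,1}(k) = |\csc(\pi a k/8)|$ into Theorem \ref{gcd8} yields precisely the stated formula for $g_a^{(1)}(n)$.
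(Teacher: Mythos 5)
Your proposal is correct and follows essentially the same route as the paper, which likewise obtains the $d=1$ case by specializing the general theorem preceding it with $\alpha = 0$, $\Lambda(a,1) = 1/4$, and $T_{a,1}(k) = \left|\csc\left(\frac{\pi a k}{8}\right)\right|$, citing Lemmas 2 through 5. Your explicit verification that $(k,8)=1$ forces $ak \equiv \pm 1, \pm 3 \pmod 8$ and hence $\left|\csc\left(\frac{\pi a k}{8}\right)\right| \le \csc\left(\frac{\pi}{8}\right)$ is a welcome addition: the paper merely asserts the boundedness of $T_{a,d}(k)$ in the proof of Lemma 4 without supplying this argument.
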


\section{Complete Formula}

Combining (\ref{gcd8real}), (\ref{gcd4}), (\ref{gcd2}), (\ref{gcd1}), and collecting the error terms, we have:

\begin{multline}
g_a(n) = \frac{1}{2\sqrt{2}}  \sum_{\substack{(k,8)=1 \\ k \le N}} \frac{i}{k} \left|\csc\left(\frac{\pi a k}{8}\right)\right| A_{a,1}(n,k)\\ \times \oint_{K_k^{(-)}} \exp\left( \frac{\pi}{8k} \left( \frac{1}{4z} + z(16n+4a-5) \right) \right)dz\\ + \frac{1}{\sqrt{2}} \sum_{\substack{(k,8)=4 \\ k \le N}} \frac{i}{k} A_{a,4}(n,k) \oint_{K_k^{(-)}} \exp\left( \frac{\pi}{8k} \left( \frac{1}{z} + z(16n+4a-5) \right) \right)dz\\ + \sum_{\substack{(k,8)=8 \\ k \le N}} \frac{i}{k} A_{a,8}(n,k) \oint_{K_k^{(-)}} \exp\left( \frac{\pi}{8k} \left( \frac{1}{z} + z(16n+4a-5) \right) \right)dz \\  + O\left( e^{3n\pi} n^{1/3} N^{-1/3 + \epsilon} \right)\label{penultimateetc},
\end{multline} with

\begin{equation}
A_{a,d}(n,k) = \sum_{\substack{0 \le h < k, \\ (h,k) = 1, \\ h \equiv \pm a\pmod d}} \omega_{a,d}(h,k) e^{-2\pi i n h/k}\label{finalrootof1}.
\end{equation}

We represent the remaining integrals with modified Bessel functions \cite{Watson}:

\begin{lemma}

\begin{multline}
\oint_{K_k^{(-)}} \exp\left( \frac{\pi}{8k} \left( \frac{1}{z} + z(16n+4a-5) \right) \right)dz\\ = \frac{-2\pi i}{\sqrt{16n + 4a - 5}} I_1 \left( \frac{\pi\sqrt{16n+4a-5}}{4k} \right),
\end{multline}

\begin{multline}
\oint_{K_k^{(-)}} \exp\left( \frac{\pi}{8k} \left( \frac{1}{4z} + z(16n+4a-5) \right) \right)dz\\ = \frac{-\pi i}{\sqrt{16n + 4a - 5}} I_1 \left( \frac{\pi\sqrt{16n+4a-5}}{8k} \right).
\end{multline}

\end{lemma}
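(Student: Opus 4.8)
The plan is to exploit the single most important feature of the contour $K_k^{(-)}$ recorded just above the statement: along it $\Re(1/z)=k$ is constant. This is precisely the statement that the inversion $w=1/z$ carries the circle $K_k^{(-)}$, which passes through the origin, to the vertical line $\Re(w)=k$. I would therefore begin by writing both integrals in the common shape $\oint_{K_k^{(-)}}\exp\!\big(A/z+Bz\big)\,dz$, where for the first integral $A=\pi/8k$ and for the second $A=\pi/32k$, while in both cases $B=\pi(16n+4a-5)/8k$. Under $w=1/z$ we have $dz=-w^{-2}\,dw$, $A/z=Aw$ and $Bz=B/w$; checking where $z_I$ and $z_T$ go under inversion shows that the clockwise orientation encoded in the superscript of $K_k^{(-)}$ carries the circle to the upward-oriented line $\mathcal L\colon\Re(w)=k$, so the integral becomes $-\int_{\mathcal L}w^{-2}\exp\!\big(Aw+B/w\big)\,dw$. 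Tracking this orientation is pure bookkeeping, but it must be done honestly since it is what ultimately produces the minus signs in the statement.

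The analytic heart of the argument is the identity
\begin{equation*}
\frac{1}{2\pi i}\int_{\mathcal L}w^{-2}\exp\!\Big(w+\frac{x}{w}\Big)\,dw = x^{-1/2}I_1\big(2\sqrt{x}\big),
\end{equation*}
after which the two displays follow by the rescaling $w\mapsto A^{-1}w$, which turns $Aw+B/w$ into $w+AB/w$ at the cost of an overall factor of $A$ and replaces $x$ by $AB$. I would prove this identity in two moves. First I would deform $\mathcal L$ to the loop encircling the origin: closing the line to the left by a large semicircle, the factor $e^{w}$ decays for $\Re(w)<0$ while $e^{x/w}\to1$ and $w^{-2}$ supplies the polynomial decay needed for a Jordan-lemma estimate, so the semicircular contribution vanishes and the line integral equals the counterclockwise loop integral about the essential singularity at $w=0$. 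Second, on the loop I would substitute the product of the power series for $e^{w}$ and $e^{x/w}$, integrate term by term, and extract the residue, i.e. the coefficient of $w^{-1}$: only the terms with $j=m+1$ survive, giving $\sum_{m\ge0}x^m/\big(m!\,(m+1)!\big)$, which is exactly the series for $x^{-1/2}I_1(2\sqrt{x})$.

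Assembling these pieces gives $\oint_{K_k^{(-)}}\exp(A/z+Bz)\,dz=-2\pi i\,\sqrt{A/B}\,I_1\big(2\sqrt{AB}\big)$, and I would finish by substituting the two parameter sets. For the first integral $\sqrt{A/B}=(16n+4a-5)^{-1/2}$ and $2\sqrt{AB}=\pi\sqrt{16n+4a-5}/4k$; for the second, $A$ is one quarter as large, so $\sqrt{A/B}=\tfrac12(16n+4a-5)^{-1/2}$ and $2\sqrt{AB}=\pi\sqrt{16n+4a-5}/8k$, reproducing the stated $-2\pi i$ and $-\pi i$ prefactors respectively.

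The step I expect to be the genuine obstacle is the contour deformation in the first move: one must justify discarding the closing semicircle near the two points where it meets the imaginary axis, where $\Re(w)=0$ and $e^{w}$ no longer decays, and one must treat the passage to the loop around the essential singularity at the origin with care. Everything else — the inversion, the rescaling, the orientation signs, and the final substitution of parameters — is routine once that deformation is licensed.
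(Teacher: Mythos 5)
Your proposal is correct and takes essentially the same route as the paper: the inversion $z=1/w$ carrying $K_k^{(-)}$ to the line $\Re(w)=k$, followed by a rescaling (the paper's substitutions $w=8kt/\pi$ and $w=32kt/\pi$ are exactly your $w\mapsto A^{-1}w$ for the two parameter sets), reducing both integrals to the standard Schl\"afli-type line integral for $I_1$. The only difference is that the paper simply cites Watson for the representation $\frac{1}{2\pi i}\int_{\mathcal{L}} w^{-2}e^{w+x/w}\,dw = x^{-1/2}I_1\left(2\sqrt{x}\right)$, whereas you prove it from scratch---and the deformation step you flag as the genuine obstacle is in fact easier than you fear, since $\left|e^{w}\right|\le e^{k}$ on the whole closing semicircle $\Re(w)\le k$ and $\left|e^{x/w}\right|$ is bounded there, so the $w^{-2}$ factor alone beats the $O(R)$ arc length.
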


The first equality may be proved by changing variables, first by $z = 1/w$, and then by $w = 8kt/\pi$.  We may then represent the integral with the modified Bessel function $I_1$ \cite{Watson}:  The second equality may be similarly proved by changing variables by $z = 1/w$, and then by $w = 32kt/\pi$.

\subsection{Finishing the Limit Process}

We now take (\ref{penultimateetc}), with $A_{a,d}(n,k)$ defined by (\ref{finalrootof1}), substitute and simplify through Lemma 6, and let $N \rightarrow \infty$.  We now have our final formula.

\begin{theorem}

Let $g_a(n)$ be the number of type-$a$ G\"ollnitz--Gordon partitions of $n$, with $a=1$ or $3$.  Then

\begin{multline}
g_a(n) = \frac{\pi\sqrt{2}}{4\sqrt{16n+4a-5}} \sum\limits_{(k,8) = 1} \left|\csc\left(\frac{\pi a k}{8}\right)\right| \frac{A_{a,1}(n,k)}{k} I_1 \left( \frac{\pi\sqrt{16n+4a-5}}{8k} \right)\\ + \frac{\pi\sqrt{2}}{\sqrt{16n+4a-5}} \sum\limits_{(k,8) = 4} \frac{A_{a,4}(n,k)}{k} I_1 \left( \frac{\pi\sqrt{16n+4a-5}}{4k} \right)\\ + \frac{2\pi}{\sqrt{16n+4a-5}} \sum\limits_{(k,8) = 8} \frac{A_{a,8}(n,k)}{k} I_1 \left( \frac{\pi\sqrt{16n+4a-5}}{4k} \right)\label{completed}.
\end{multline}

\end{theorem}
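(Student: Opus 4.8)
The plan is to read off (\ref{completed}) from the penultimate formula (\ref{penultimateetc}) by evaluating its three surviving contour integrals with Lemma 6 and then letting $N \to \infty$. First I would apply Lemma 6 term by term: the integrals attached to the $d=4$ and $d=8$ sums carry the coefficient $1/z$ and so are each replaced by $\frac{-2\pi i}{\sqrt{16n+4a-5}} I_1\!\left(\frac{\pi\sqrt{16n+4a-5}}{4k}\right)$, whereas the integral attached to the $d=1$ sum carries $\frac{1}{4z}$ and is replaced by $\frac{-\pi i}{\sqrt{16n+4a-5}} I_1\!\left(\frac{\pi\sqrt{16n+4a-5}}{8k}\right)$. (The $d=2$ contribution has already been absorbed into the error term by (\ref{gcd2}), so it does not reappear.) This leaves three truncated sums over $k \le N$ together with the error term $O\!\left(e^{3n\pi} n^{1/3} N^{-1/3+\epsilon}\right)$.

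The next step is bookkeeping of constants. Each summand already carries the factor $i/k$; multiplying by the $-2\pi i$ or $-\pi i$ supplied by Lemma 6 and using $i \cdot (-2\pi i) = 2\pi$ and $i \cdot (-\pi i) = \pi$ makes every term real and clears the imaginary units. Folding the resulting $2\pi$ or $\pi$ into the leading constants $1$, $1/\sqrt 2$, and $1/(2\sqrt 2)$ of the three sums, and using $2/\sqrt 2 = \sqrt 2$ together with $\pi/(2\sqrt 2) = \pi\sqrt 2/4$, produces exactly the prefactors $\frac{2\pi}{\sqrt{16n+4a-5}}$, $\frac{\pi\sqrt 2}{\sqrt{16n+4a-5}}$, and $\frac{\pi\sqrt 2}{4\sqrt{16n+4a-5}}$ of (\ref{completed}).

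Finally I would pass to the limit $N \to \infty$. For fixed $n$ and any $\epsilon < 1/3$ the error term tends to $0$, so it remains only to show that each of the three series converges, whereupon the truncated sums tend to their infinite counterparts. This convergence is the one genuinely analytic point, and I would settle it by combining the Kloosterman-type estimate $|A_{a,d}(n,k)| = O\!\left(k^{2/3+\epsilon} n^{1/3}\right)$ (the Salié bound) with the small-argument asymptotic $I_1(x) \sim x/2$ as $x \to 0$, which gives $I_1(c/k) = O(1/k)$ for the relevant $c = c(n)$. Together with the explicit $1/k$ in each summand these yield a general term of size $O\!\left(k^{-1} \cdot k^{2/3+\epsilon} \cdot k^{-1}\right) = O\!\left(k^{-4/3+\epsilon}\right)$, which is summable once $\epsilon < 1/3$; hence each series converges absolutely and the limit exists. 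Passing to the limit then gives (\ref{completed}).
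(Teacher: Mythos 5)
Your proposal is correct and is essentially the paper's own (one-line) proof: substitute Lemma 6 into (\ref{penultimateetc}), clear the constants---your bookkeeping $i\cdot(-2\pi i)=2\pi$, $i\cdot(-\pi i)=\pi$, and $\pi/(2\sqrt{2})=\pi\sqrt{2}/4$ matches the prefactors in (\ref{completed}) exactly---and let $N\to\infty$ so that the error term $O\left(e^{3n\pi}n^{1/3}N^{-1/3+\epsilon}\right)$ vanishes for fixed $n$. Your closing convergence argument, combining the Sali\'e-type bound $|A_{a,d}(n,k)|=O\left(k^{2/3+\epsilon}n^{1/3}\right)$ with $I_1(x)\sim x/2$ to get absolutely convergent series with terms $O\left(k^{-4/3+\epsilon}\right)$, is sound and supplies a detail the paper leaves tacit (though note the error bound alone already forces the truncated sums to converge to $g_a(n)$).
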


\section{Numerical Tests}

Mathematica was used to test (\ref{completed}), for $k$ truncated.  See the tables below.  For the first 200 positive integers, our formula with $k \le 3\sqrt{n}$ gives the correct value with an absolute error less than $ 0.33$.  Since $g_a(n)\in\mathbb{Z}$, we need only round our formulaic value to the nearest integer to achieve the correct answer.

\begin{table}
\begin{center}
\scalebox{0.8}{
\begin{tabular}{ | l | l | l | p{5cm} |}
\hline
$n$ & $g_1(n)$ & Eqn. (\ref{completed}), $a=1$, $1\le k \le 3\sqrt{n}$ & Absolute Error of (\ref{completed}) \\ \hline
\hline
1 & 1 & 0.7784305652 & 0.2215694348 \\ \hline
2 & 1 & 0.7196351376 & 0.2803648624 \\ \hline
3 & 1 & 1.114485490 & 0.114485490 \\ \hline
4 & 2 & 1.890769460 & 0.109230540 \\ \hline
5 & 2 & 2.146945231 & 0.146945231 \\ \hline
6 & 2 & 2.174897898 & 0.174897898 \\ \hline
7 & 3 & 2.917027886 & 0.082972114 \\ \hline
8 & 4 & 3.994864237 & 0.005135763 \\ \hline
9 & 5 & 4.903833678 & 0.096166322 \\ \hline
10 & 5 & 5.108441112 & 0.108441112 \\ \hline
20 & 26 & 26.07125673 & 0.07125673 \\ \hline
40 & 288 & 287.9388309 & 0.0611691 \\ \hline
60 & 1989 & 1988.942843 & 0.057157 \\ \hline
80 & 10570 & 10569.99993 & 0.00007 \\ \hline
100 & 47091 & 47090.99132 & 0.00868 \\ \hline
150 & 1191854 & 1191853.996 & 0.004 \\ \hline
200 & 18900623 & 18900622.99 & 0.001 \\ \hline
\hline
\end{tabular}}
\caption{$g_1(n)$ compared to (\ref{completed}) truncated for $k$, with $a=1$.}\label{title1a}
\end{center}
\end{table}

\begin{table}
\begin{center}
\scalebox{0.8}{
\begin{tabular}{ | l | l | l | p{5cm} |}
\hline
$n$ & $g_3(n)$ & Eqn. (\ref{completed}), $a=3$, $1\le k \le 3\sqrt{n}$ & Absolute Error of (\ref{completed}) \\ \hline
\hline
1 & 0 & 0.2908871603 & 0.2908871603 \\ \hline
2 & 0 & 0.1385488254 & 0.1385488254 \\ \hline
3 & 1 & 0.8129880460 & 0.1870119540 \\ \hline
4 & 1 & 0.9584818018 & 0.0415181982 \\ \hline
5 & 1 & 0.8666320258 & 0.1333679742 \\ \hline
6 & 1 & 0.9177374697 & 0.0822625303 \\ \hline
7 & 1 & 1.323340028 & 0.323340028 \\ \hline
8 & 2 & 2.095679009 & 0.095679009 \\ \hline
9 & 2 & 2.042654099 & 0.042654099 \\ \hline
10 & 2 & 1.953812941 & 0.046187059 \\ \hline
20 & 12 & 12.01649403 & 0.01649403 \\ \hline
40 & 127 & 126.9760443 & 0.0239557 \\ \hline
60 & 865 & 865.0090307 & 0.0090307 \\ \hline
80 & 4560 & 4560.002784 & 0.002784 \\ \hline
100 & 20223 & 20223.00416 & 0.00416 \\ \hline
150 & 508454 & 508454.0481 & 0.0481 \\ \hline
200 & 8034534 & 8034534.006 & 0.006 \\ \hline
\hline
\end{tabular}}
\caption{$g_3(n)$ compared to (\ref{completed}) truncated for $k$, with $a=3$.}\label{title2a}
\end{center}
\end{table}

\end{document}